\documentclass[reqno,12pt]{amsart}
\headheight=8pt     \topmargin=0pt
\textheight=632pt   \textwidth=432pt
\oddsidemargin=18pt \evensidemargin=18pt

\usepackage{amssymb,amsmath,amsfonts,amsthm}
\begin{document}


\newcommand\bbb{\ensuremath{\mathbb{B}}}
\newcommand\bbc{\ensuremath{\mathbb{C}}}
\newcommand\bbd{\ensuremath{\mathbb{D}}}
\newcommand\bbf{\ensuremath{\mathbb{F}}}
\newcommand\bbk{\ensuremath{\mathbb{K}}}
\newcommand\bbn{\ensuremath{\mathbb{N}}}
\newcommand\bbp{\ensuremath{\mathbb{P}}}
\newcommand\bbq{\ensuremath{\mathbb{Q}}}
\newcommand\bbr{\ensuremath{\mathbb{R}}}
\newcommand\bbt{\ensuremath{\mathbb{T}}}
\newcommand\bbz{\ensuremath{\mathbb{Z}}}

\newcommand\bbbs{\ensuremath{\mathbb{B}\text{ }}}
\newcommand\bbcs{\ensuremath{\mathbb{C}\text{ }}}
\newcommand\bbds{\ensuremath{\mathbb{D}\text{ }}}
\newcommand\bbfs{\ensuremath{\mathbb{F}\text{ }}}
\newcommand\bbks{\ensuremath{\mathbb{K}\text{ }}}
\newcommand\bbns{\ensuremath{\mathbb{N}\text{ }}}
\newcommand\bbps{\ensuremath{\mathbb{P}\text{ }}}
\newcommand\bbqs{\ensuremath{\mathbb{Q}\text{ }}}
\newcommand\bbrs{\ensuremath{\mathbb{R}\text{ }}}
\newcommand\bbts{\ensuremath{\mathbb{T}\text{ }}}
\newcommand\bbzs{\ensuremath{\mathbb{Z}\text{ }}}

\newcommand\mca{\ensuremath{\mathcal{A}}}
\newcommand\mcb{\ensuremath{\mathcal{B}}}
\newcommand\mcc{\ensuremath{\mathcal{C}}}
\newcommand\mcd{\ensuremath{\mathcal{D}}}
\newcommand\mce{\ensuremath{\mathcal{E}}}
\newcommand\mcf{\ensuremath{\mathcal{F}}}
\newcommand\mch{\ensuremath{\mathcal{H}}}
\newcommand\mck{\ensuremath{\mathcal{K}}}
\newcommand\mcl{\ensuremath{\mathcal{L}}}
\newcommand\mcm{\ensuremath{\mathcal{M}}}
\newcommand\mcn{\ensuremath{\mathcal{N}}}
\newcommand\mcp{\ensuremath{\mathcal{P}}}
\newcommand\mcs{\ensuremath{\mathcal{S}}}
\newcommand\mct{\ensuremath{\mathcal{T}}}
\newcommand\mcu{\ensuremath{\mathcal{U}}}
\newcommand\mcw{\ensuremath{\mathcal{W}}}
\newcommand\mcx{\ensuremath{\mathcal{X}}}
\newcommand\mcy{\ensuremath{\mathcal{Y}}}
\newcommand\mcas{\ensuremath{\mathcal{A}\text{ }}}
\newcommand\mcbs{\ensuremath{\mathcal{B}\text{ }}}
\newcommand\mccs{\ensuremath{\mathcal{C}\text{ }}}
\newcommand\mcds{\ensuremath{\mathcal{D}\text{ }}}
\newcommand\mces{\ensuremath{\mathcal{E}\text{ }}}
\newcommand\mcfs{\ensuremath{\mathcal{F}\text{ }}}
\newcommand\mchs{\ensuremath{\mathcal{H}\text{ }}}
\newcommand\mcks{\ensuremath{\mathcal{K}\text{ }}}
\newcommand\mcls{\ensuremath{\mathcal{L}\text{ }}}
\newcommand\mcms{\ensuremath{\mathcal{M}\text{ }}}
\newcommand\mcns{\ensuremath{\mathcal{N}\text{ }}}
\newcommand\mcps{\ensuremath{\mathcal{P}\text{ }}}
\newcommand\mcss{\ensuremath{\mathcal{S}\text{ }}}
\newcommand\mcts{\ensuremath{\mathcal{T}\text{ }}}
\newcommand\mcus{\ensuremath{\mathcal{U}\text{ }}}
\newcommand\mcws{\ensuremath{\mathcal{W}\text{ }}}
\newcommand\mcxs{\ensuremath{\mathcal{X}\text{ }}}
\newcommand\mcys{\ensuremath{\mathcal{Y}\text{ }}}

\newcommand\mfa{\ensuremath{\mathfrak{A}}}
\newcommand\mfb{\ensuremath{\mathfrak{B}}}
\newcommand\mfc{\ensuremath{\mathfrak{C}}}
\newcommand\mfg{\ensuremath{\mathfrak{g}}}
\newcommand\mfi{\ensuremath{\mathfrak{I}}}
\newcommand\mfm{\ensuremath{\mathfrak{M}}}
\newcommand\mfs{\ensuremath{\mathfrak{S}}}
\newcommand\mfx{\ensuremath{\mathfrak{X}}}
\newcommand\mfy{\ensuremath{\mathfrak{Y}}}
\newcommand\mfz{\ensuremath{\mathfrak{Z}}}
\newcommand\mfas{\ensuremath{\mathfrak{A}\text{ }}}
\newcommand\mfbs{\ensuremath{\mathfrak{B}\text{ }}}
\newcommand\mfcs{\ensuremath{\mathfrak{C}\text{ }}}
\newcommand\mfgs{\ensuremath{\mathfrak{g}\text{ }}}
\newcommand\mfis{\ensuremath{\mathfrak{I}\text{ }}}
\newcommand\mfms{\ensuremath{\mathfrak{M}\text{ }}}
\newcommand\mfss{\ensuremath{\mathfrak{S}\text{ }}}
\newcommand\mfxs{\ensuremath{\mathfrak{X}\text{ }}}
\newcommand\mfys{\ensuremath{\mathfrak{Y}\text{ }}}
\newcommand\mfzs{\ensuremath{\mathfrak{Z}\text{ }}}

\newcommand\bnd{\ensuremath{\mathcal{B(H)}}}
\newcommand\kpt{\ensuremath{\mathcal{K(H)}}}
\newcommand\fnt{\ensuremath{\mathcal{F(H)}}}
\newcommand\fnts{\ensuremath{\mathcal{F(H)}\text{ }}}
\newcommand\kpts{\ensuremath{\mathcal{K(H)}\text{ }}}
\newcommand\bnds{\ensuremath{\mathcal{B(H)}\text{ }}}

\newcommand\bndx{\ensuremath{\mathcal{B}\mathfrak{(X)}}}
\newcommand\kptx{\ensuremath{\mathcal{K}\mathfrak{(X)}}}
\newcommand\fntx{\ensuremath{\mathcal{F}\mathfrak{(X)}}}
\newcommand\fntxs{\ensuremath{\mathcal{F}\mathfrak{(X)}\text{ }}}
\newcommand\kptxs{\ensuremath{\mathcal{K}\mathfrak{(X)}\text{ }}}
\newcommand\bndxs{\ensuremath{\mathcal{B}\mathfrak{(X)}\text{ }}}

\newcommand\bndxy{\ensuremath{\mathcal{B}\mathfrak{(X,Y)}}}
\newcommand\kptxy{\ensuremath{\mathcal{K}\mathfrak{(X,Y)}}}
\newcommand\fntxy{\ensuremath{\mathcal{F}\mathfrak{(X,Y)}}}
\newcommand\fntxys{\ensuremath{\mathcal{F}\mathfrak{(X,Y)}\text{ }}}
\newcommand\kptxys{\ensuremath{\mathcal{K}\mathfrak{(X,Y)}\text{ }}}
\newcommand\bndxys{\ensuremath{\mathcal{B}\mathfrak{(X,Y)}\text{ }}}

\newcommand\Ran{\text{Ran}}

\newcommand\spr{\text{spr}}
\newcommand\co{\text{co}}

\newcommand\floor[1]{\left\lfloor #1\right\rfloor}
\newcommand\ceil[1]{\left\lceil #1\right\rceil}

\newcommand\ddt[2]{\ensuremath{\frac{d #1}{d #2}}}
\newcommand\pdt[2]{\ensuremath{\frac{\partial #1}{\partial #2}}}

\newcommand\ip[2]{\ensuremath{\left\langle #1,#2\right\rangle}}

\newcommand\half{\ensuremath{\frac{1}{2}}}
\newcommand\spn{\ensuremath{\text{span}}}
\newcommand\fdim{\ensuremath{\text{fdim}}}
\newcommand{\tnorm}[1]{%
  \left\vert\kern-0.9pt\left\vert\kern-0.9pt\left\vert #1
    \right\vert\kern-0.9pt\right\vert\kern-0.9pt\right\vert}

\newtheorem{theorem}{Theorem}[section]
\newtheorem{lemma}[theorem]{Lemma}
\newtheorem{cor}[theorem]{Corollary}
\newtheorem{propn}[theorem]{Proposition}

\theoremstyle{definition}
\newtheorem{eg}[theorem]{Example}
\newtheorem{defn}[theorem]{Definition}

\title[free products of von Neumann algebras]{The amalgamated free product of hyperfinite von Neumann algebras over finite dimensional subalgebras}

\author[Dykema]{Kenneth J. Dykema}
\author[Redelmeier]{Daniel Redelmeier}

\address{Department of Mathematics, Texas A\&M University,
College Station, TX 77843-3368, USA}
\email{kdykema@math.tamu.edu, danielr@math.tamu.edu}
\thanks{Research supported in part by NSF grant DMS--0901220}
\keywords{amalgamated free product, hyperfinite von Neumann algebras}
\maketitle

\bibliographystyle{amsplain}

\begin{abstract}  In this paper we describe the amalgamated free product of two hyperfinite von Neumann algebras over a finite dimensional subalgebra.  In general the free product is of the form $H\oplus\bigoplus_{i=1}^{N}F_{i}$ where $F_{i}$ are interpolated free group factors and $H$ is a hyperfinite von Neumann algebra.  We then show that the class of von Neumann algebras of this form is closed under taking amalgamated free products over finite dimensional subalgebras.
\end{abstract}
\section{Introduction}

The (reduced) amalgamated free products of C$^{*}$-algebras and of von Neumann algebras have proved to be useful constructions since first introduced by Voiculescu in \cite{voicAFP} and used by Popa in \cite{popaAFP}.  For $A$ and $B$ finite von Neumann algebras, and $D$ a unital subalgebra of both $A$ and $B$, with trace preserving expectations $E^{A}_{D}$ and $E^{B}_{D}$, we denote the amalgamated free product of $A$ and $B$ over $D$ by $M=A*_{D}B$.  In this paper we describe $M$ in the case where $A$ and $B$ are hyperfinite and $D$ is finite dimensional.  This extends the results in \cite{kenDuke}, which examines the free product over the scalars of hyperfinite von Neumann algebras, and in \cite{kenAmJM}, which examines the amalgamated free product of multimatrix algebras.

With Theorem \ref{closedtheorem} we also extend Theorem 4.4 in \cite{kenLMS},
which considered similar amalgamated free products but with additional restrictions on $A$ and $B$
and which was used by Kodiyalam and Sunder in \cite{kodsund} and in the related paper by Guionnet,  Jones, and Shlyakhtenko in \cite{GJS}.

\section{Basic Theorems and Definitions}
We consider all von Neumann algebras to be equipped with a specified normal faithful tracial state.  We will often write von Neumann algebras in the following way:
\[
\mcn=\overset{p_{1}}{A_{1}}\oplus \overset{p_{2}}{A_{2}}\oplus\dots,
\]
where $p_{i}$ denotes the central support of $A_{i}$ in $\mcn$.  If a summand is a matrix algebra, we use the notation $\underset{t}{M_{n}}$ to indicate that the trace of a minimal projection in $M_{n}$ is $t$.  As mentioned in the introduction, $A*_{D} B$ denotes the amalgamated free product of von Neumann algebras $A$ and $B$ over $D$ with respect to a trace preserving conditional expectation onto $D$.   When we refer to the expectation we mean the expectation onto $D$ unless otherwise specified.

Our results are in terms of the interpolated free group factors that were introduced
independently in \cite{kenIFGF} and by Radulescu in \cite{rad}.  We shall
denote these by $L(F_{s})$, where $1<s\leq\infty$.  If $s$ is an integer then it is
the corresponding free group factor.  As one would hope, these satisfy
$L(F_{s})*L(F_{s'})=L(F_{s+s'})$, and we think of $s$ as the (not necessarily integer) number
of free generators of $L(F_s)$.  When an interpolated free group factor is
compressed by a projection of trace $t$ or is dilated, we have
\[
\left(L(F_{s})\right)_{t}=L(F_{1+(s-1)/t^{2}}),
\]
$0<t<\infty$.  It is known that either $L(F_{s})\cong L(F_{s'})$ for all $s$ and $s'$
or $L(F_{s})\ncong L(F_{s'})$ for all $s\ne s'$, though it remains open which of the two holds.
This is the present state of knowledge about the isomorphism problem for free group factors.

If it turns out that the interpolated free group factors are not isomorphic to each other,
than then it will be interesting to keep track of the number of generators whenever free group factors appear
in constructions or results.
For this purpose the (perhaps confusingly named) notion of \emph{free dimension} was
introduced in \cite{kenDuke}.
Ostensibly, the free dimension is a number assigned to certain finite von Neumann algebras
endowed with normal, faithful, tracial states.
For a finite von Neumann algebra $M$ with given trace, we will denote the
free dimension with respect to that trace by $\fdim(M)$ (thus, the name of the trace is suppressed in the notation).
The free dimension is defined by the following conventions:
\begin{enumerate}
\item \label{fdim:it1}  For $M=L(F_{s})$, $\fdim(M)=s$.
\item  For $M=\oplus_{i\in I}\underset{t_{i}}{M_{n_{i}}}$, $\fdim(M)=1-\sum_{i\in I}t_{i}^{2}$.
\item  For $M$ a diffuse hyperfinite von Neumann algebra, $\fdim(M)=1$.
\item  If 
\[
M=F_{0}\oplus\bigoplus_{i\in I}\overset{p_{i}}{L(F_{s_{i}})}\oplus\bigoplus_{j\in J}\underset{t_{j}}{M_{n_{j}}},
\]
where $F_{0}$ is a diffuse hyperfinite von Neumann algebra, or zero, and $I$ and $J$ may be empty, then,
\[
\fdim(M)=1+\left(\sum_{i\in I}\tau(p_{i})^{2}(s_{i}-1)\right)-\sum_{j\in J}t_{j}^{2}.
\]
\end{enumerate}
In fact, as is apparent from item~(\ref{fdim:it1}), we do not know if $\fdim(M)$ is in all cases well defined.
There are several ways around this problem.
One is to say, for the purpose of interpreting results, we only care about the free dimension if the free group
factors are not isomorphic to each other, in which case it is well defined.
Another more cumbersome but mathematically correct approach is to speak only of free dimension for
generating sets of finite von Neumann algebras.
This approach was taken in~\cite{kenLMS} and earlier in~\cite{kenJFA02}, and corresponds well to results
about Voiculescu's free entropy dimension (which is intrinsically a different notion,
see~\cite{VoiIII}, \cite{VoiStr} and \cite{JungHyperf}).
See section~3 of~\cite{kenLMS}
and section~1 of~\cite{kenJFA02} for more discussion.
A third approach is to speak only of ``the conjectured free dimension'' which would mean that we conjecture
that the free group factors
are nonisomorphic, (in which case the free dimension has meaning);
however, we are not ready to conjecture nonisomorphism of free group factors.
In this article, we will use the notation $\fdim(M)=x$ as shorthand for (but without
actually phrasing) the more cumbersome ``has a
generating set of free  dimension $x$.''

In \cite{kenDuke} the concept of a \emph{standard embedding} from one interpolated free group factor into another is introduced.  We will rely heavily on this concept, in particular the following properties, shown in \cite{kenDuke},
\begin{enumerate}
\item For $A=L(F_{s})$ and $B=L(F_{s'})$, $s<s'$, then for $\phi:A\to B$ and projection $p\in A$, $\phi$ is standard if and only if $\phi|_{pAp}\to\phi(p)B\phi(p)$ is standard.
\item The inclusion $A\to A*B$ is standard if $A$ is an interpolated free group factor and $B$ is an interpolated free group factor, $L(\bbz)$, or a finite dimensional algebra other than $\bbc$.
\item The composition of standard embeddings is standard.
\item For $A_{n}=L(F_{s_{n}})$, with $s_{n}<s_{n'}$ if $n<n'$, and $\phi_{n}:A_{n}\to A_{n+1}$ a sequence of standard embeddings, then the inductive limit of the $A_{n}$ with the inclusions $\phi_{n}$ is $L_{F_{s}}$ where $s=\lim_{n\to\infty}s_{n}$.
\end{enumerate}

\section{Useful Lemmas}

\begin{lemma}\label{chainlemma}  Let $\mcm$ be a hyperfinite von Neumann algebra with separable predual and $D$ be a finite dimensional abelian subalgebra of it.  Then there exists a chain of finite dimensional subalgebras in $\mcm$ containing $D$ whose union is dense in $\mcm$.
\end{lemma}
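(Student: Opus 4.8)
The plan is to isolate a single ``enlargement'' step and then build the chain by exhaustion. As $\mcm$ has separable predual, fix a sequence $(x_k)_{k\ge1}$ that is $\|\cdot\|_2$-dense in $\mcm$. It suffices to produce finite dimensional subalgebras $D=N_0\subseteq N_1\subseteq N_2\subseteq\cdots$ with $\mathrm{dist}_{\|\cdot\|_2}(x_k,N_k)<1/k$, for then $\bigcup_k N_k$ is $\|\cdot\|_2$-dense, hence $\sigma$-strongly dense, in $\mcm$. So the whole matter rests on the \emph{enlargement claim}: given finite dimensional $N_0$ with $D\subseteq N_0$, an element $x\in\mcm$, and $\varepsilon>0$, there is finite dimensional $N$ with $N_0\subseteq N$ and $\mathrm{dist}_{\|\cdot\|_2}(x,N)<\varepsilon$.

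First I would reduce this claim, via a matrix-unit decomposition, to the case of an abelian subalgebra. Write $N_0=\bigoplus_j M_{n_j}$ with matrix units $e^{(j)}_{rs}$, set $q_j=e^{(j)}_{11}$ and $q=\sum_j q_j$, and let $D_q=\bigoplus_j\bbc q_j$. From $\sum_{j,r}e^{(j)}_{rr}=1$ and $e^{(j)}_{rr}\,y\,e^{(j')}_{ss}=e^{(j)}_{r1}\bigl(e^{(j)}_{1r}\,y\,e^{(j')}_{s1}\bigr)e^{(j')}_{1s}$, with the middle factor in $q_j\mcm q_{j'}\subseteq q\mcm q$, one sees that any finite dimensional $P\subseteq q\mcm q$ with $D_q\subseteq P$ gives rise to a finite dimensional $*$-algebra $N:=\mathrm{span}\{e^{(j)}_{r1}\,p\,e^{(j')}_{1s}:p\in P\}$ with $N_0\subseteq N$; and if $P$ approximates the finitely many entries $e^{(j)}_{1r}\,x\,e^{(j')}_{s1}\in q\mcm q$ closely enough, then amplifying back through the matrix units (each of operator norm one) gives $\mathrm{dist}_{\|\cdot\|_2}(x,N)<\varepsilon$. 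Since $q\mcm q$ is again hyperfinite with separable predual and $D_q$ is abelian, the enlargement claim for $N_0$ thus follows from its instance for the abelian subalgebra $D_q\subseteq q\mcm q$.

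The abelian case is the heart of the matter, and here the corner trick by itself is circular, since for abelian $N_0$ we have $q=1$ and $q\mcm q=\mcm$. To gain ground I would use a dichotomy valid in the finite algebra $\mcm$. Either all minimal projections $e_i$ of $D$ are central, in which case $\mcm=\bigoplus_i e_i\mcm e_i$ splits, $D$ contributes only the unit of each summand, and any dense chain in the hyperfinite summand $e_i\mcm e_i$ does the job; or some pair $e_i,e_{i'}$ have overlapping central supports, and comparison theory then supplies a nonzero partial isometry $v\in\mcm$ with $vv^*\le e_i$ and $v^*v\le e_{i'}$. Adjoining $v$ to $D$ yields a finite dimensional $R\supseteq D$ carrying a genuine $2\times2$ block, so that the associated corner projection $q_R$ has $\tau(q_R)<1$; feeding $R$ back through the reduction of the previous paragraph then transports the problem to the strictly smaller corner $q_R\mcm q_R$. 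The crux is to iterate these two moves in tandem with the exhaustion of paragraph one, so that along the chain the diagonal corner either shrinks in trace to $0$ or becomes central (whence the easy alternative applies), all the while keeping $D\subseteq N_k$ and capturing $x_k$. Verifying that this alternation terminates in a single countable chain, using separability of the predual to control the limit, is where the genuine work lies.
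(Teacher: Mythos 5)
Your two soft reductions are fine: the exhaustion scheme, and the matrix-unit argument reducing the enlargement claim for $N_0=\bigoplus_j M_{n_j}$ to the abelian algebra $D_q$ in the corner $q\mcm q$, are both correct as stated (your $N=\mathrm{span}\{e^{(j)}_{r1}pe^{(j')}_{1s}\}$ is indeed a $*$-algebra because $q_{j'}\in P$). But the lemma lives entirely in the part you have not proved. Your final paragraph is a plan rather than an argument, and you say so yourself (``where the genuine work lies''); moreover there is a concrete obstruction showing it cannot be closed by routine estimates. Take $\mcm=R$, the hyperfinite $\mathrm{II}_1$ factor, and $D=\bbc p\oplus\bbc(1-p)$ with $\tau(p)$ irrational. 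Every corner of a factor is a factor, so your ``easy alternative'' (minimal projections becoming central) never occurs; even with maximal choices of the partial isometries, your iteration is the subtractive Euclidean algorithm on the pair of traces: it never terminates and the successive corner algebras $D_{q_n}$ stay two-dimensional. The corner traces $\tau(q_n)$ do tend to $0$, but that alone does not let you capture the dense sequence $(x_k)$: to capture $x_k$ you must approximate its entries $e^{(j)}_{1r}x_k e^{(j')}_{s1}$ by a finite dimensional subalgebra of $q_n\mcm q_n$ containing $D_{q_n}$ --- which is exactly the statement being proved, so the scheme is circular --- or else approximate them crudely (say by elements of $D_{q_n}$) using $\|y\|_2\le\|y\|\,\tau(q_n)^{1/2}$. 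The crude route gives, even after exploiting $L^2$-orthogonality of the blocks $e^{(j)}_{rr}\mcm e^{(j')}_{ss}$, an error of order $K_n\tau(q_n)^{1/2}$, where $K_n$ is the total size of the matrix units of the $n$-th algebra; for $\tau(p)=(\sqrt5-1)/2$ the blocks have Fibonacci sizes, so $K_n\sim\phi^n$ while $\tau(q_n)\sim\phi^{-n}$, and the bound diverges. So the alternation you propose supplies no mechanism for convergence.

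For contrast, the paper sidesteps the iteration entirely with structure theory, and something of that nature is what is needed to fill your gap. It splits $\mcm$ into its type I and type II parts. On the type I part it identifies $\mcm$ with $\bigoplus_i M_{m_i}\otimes L^\infty(X,\mu_i)$ with $D$ diagonal, and builds the chain from finer and finer partitions of $X$ compatible with $D$. On the type II part it writes $\mcm=L^\infty(X,\mu)\otimes R$ and replaces $p_1$ (up to equivalence, using the center-valued trace $f$) by an explicit projection built as a sum of diagonal matrix units of the generating chain $M_{2^k}\subseteq R$ tensored with characteristic functions of level sets of $f$; finite dimensional algebras containing $p_1$ and $p_2$ and exhausting $\mcm$ are then constructed by hand around this normal form. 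In short, the hard (abelian, type II) case is handled by putting $D$ into explicit position adapted to a generating chain of $\mcm$, not by an abstract corner induction; your write-up needs this, or an equivalent input, to become a proof.
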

\begin{proof}

$\mcm$ can be split as the direct sum of  a type I part and a type II part, each of which can be approximated separately, so we can deal with these cases individually.

If \mcms is type I then write it as $\bigoplus_{i\in I}\overset{p_{i}}{M_{m_{i}}\otimes L^{\infty}(X, \mu_{i})}$, for some finite measures $\mu_{i}$ and countable set $I$.  We may identify each $M_{m_{i}}\otimes L^{\infty}(X,\mu_{i})$ with $M_{m_{i}}(L^{\infty}(X,\mu_{i})$ in such a way that all elements of $D$ are identified with diagonal matrices.  Write $D=\oplus_{k\in I_{D}}\overset{p_{k}^{D}}{\underset{t_{k}^{D}}{\bbc}}$ with some finite index set $I_{D}$.

Set $A_{0 }=D$, and let $I_{n}\subseteq I$ consist of the first $n$ elements of $I$ (if $I$ is finite then $I_{n}=I$ for all $n>|I|$).  Set $P_{0,i}=\{X\}$ for all $i\in I$ and inductively choose $P_{n,i}$ to be a partition of $X$ so that: a) the measure $\mu_{i}$ of every set in $P_{n,i}$ is less than $1/2^{n}$ or is composed of a single atom, b) the partition $P_{n,i}$ is compatible with $D$ for $M_{m_{i}}$, and c) $P_{n,i}$ refines $P_{n-1,i}$.  By $P_{n,i}$ compatible with $M_{n_{i}}$ we mean that for each $e_{jj}\in M_{m_{i}}$ and $S_{m}\in P_{n,i}$, then $\chi_{S_{m}}e_{jj}\leq p_{k}^{D}$ for some $k\in I_{D}$.  Then set $A_{n}$ to be 
\[
\left(\bigoplus_{i\in I_{n}}\overset{p_{i}}{M_{m_{i}}\otimes \ell^{\infty}(P_{n,i},\mu_{i})}\right)\oplus \bbc^{k_{n}}.
\]
In the above the $\bbc^{k_{n}}$ is the span of $\{p_{k}^{D}-p_{k}^{D}\left(\sum_{i\in I_{n}}p_{i}\right),k\in I_{D}\}$. So we have constructed $D=A_{0}\subseteq A_{1}\subseteq \dots$ so that $\cup A_{i}$ is dense in \mcm.

For the type II case we assume $D=\bbc\oplus\bbc$, and from there the general case can be done inductively.  Let $p_{1}$ and $p_{2}$ be the minimal projections in $D$, $p_{1}+p_{2}=1$.  We write $\mcm=L^{\infty}(X,\mu)\otimes R$ where $R$ is the hyperfinite $II_{1}$ factor and $\mu$ is a finite measure.  We choose a representation of $R$ as the closure of $\cup_{k} M_{2^{k}}$, and denote the standard basis elements of $M_{2^{k}}$ as $e_{i,j}^{(2^{k})}$.  Use the inclusion of $M_{2^{k}}$ into $M_{2^{k+1}}$ which takes $e_{i,j}^{(2^{k})}$ to $e_{2i-1,2j-1}^{(2^{k+1})}+e_{2i,2j}^{(2^{k+1})}$.  

We know that projections are equivalent exactly when they have the same centre-valued trace.  Let $f\in L^{\infty}(\mu)$ be the centre-valued trace of $p_{1}$.  Note $f$ takes values only in $[0,1]$.   We can construct a projection which has centre-valued trace $f$ in the following way:
\[
p_{1}'=\sum_{k=1}^{\infty}\sum_{i=1}^{2^{k-1}}e_{(2i-1),(2i-1)}^{(2^{k})}\otimes\chi_{f^{-1}\left(\left(\frac{2i-1}{2^{k}},\frac{2i}{2^{k}}\right]\right)}
\]
where $\chi$ is the characteristic function.  To see how that works, note that any of the partial sums:
\[
p'_{1,N}=\sum_{k=1}^{N}\sum_{n=1}^{2^{k-1}}e_{(2i-1)(2i-1)}^{(2^{k})}\otimes\chi_{f^{-1}\left(\left(\frac{2i-1}{2^{k}},\frac{2i}{2^{k}}\right]\right)}
\]
is a member of $L^{\infty}\otimes M_{2^{N}}$.  Considering $p_{1,N}'$ as an $M_{2^{N}}$ valued function on $X$, note that in each region where $f$ takes values between $i/2^{N}$ and $(i+1)/2^{N}$, $p_{1,N}'$ is the matrix with 1s down the diagonal for the first $i$ entries, and zeros for the rest.  This means the centre-valued trace of $p_{1,N}'$ is the simple function $f_{N}$ taking only values of the form $i/2^{N}$ which approximates $f$ from below.  Thus $\|f_{N}-f\|\leq 1/2^{N}$, and so $p_{1}'$ has the desired centre-valued trace, and is thus equivalent to $p_{1}$.  So without loss of generality, by modifying our choice of dense subalgebra we can assume $p_{1}$ is of this form.

Start by defining the partition $P_{1}$ of $X$ as $P_{1}=\{S_{1,1},S_{1,2}\}$ where  $S_{1,1}=f^{-1}([0,1/2])$ and $S_{1,2}=f^{-1}((1/2,1])$, and define $\phi_{1,n}$ to be the characteristic function of $S_{1,n}$ times the identity.  Next define $A_{1}$ to be the algebra generated by $\{e_{1,1}^{(2)}\phi_{1,2},e_{2,2}^{(2)}\phi_{1,1},p_{1}-e_{1,1}^{(2)}\phi_{1,2},p_{2}-e_{2,2}^{(2)}\phi_{1,1}\}$.  Note these are four orthogonal projections, so they generate $\bbc^{4}$ which is clearly finite dimensional.  It is also clear that $p_{1}$ and $1-p_{1}$ are in this algebra, thus it contains $D$.

Next inductively construct partitions $P_{k}=\{S_{k,1},\dots, S_{k,\ell}\}$, satisfying the following conditions:  a)  For any $S_{k,n}\in P_{k}$ either its measure is less than $1/2^{k}$ or it is composed of a single atom, b) the range of $f$ on each $S_{k,n}\in P_{k}$ is contained in $(i/2^{k},(i+1)/2^{k}]$ for some integer $i$, and c) $P_{k}$ refines $P_{k-1}$

As before, we define $\phi_{k,n}$ to be the characteristic function of $S_{k,n}$. Define a function $r_{k}$ on $P_{k}$, so that $r_{k}(S_{k,n})=i$ where $f(S_{k,n})\subseteq(i/2^{k},(i+1)/2^{k}]$.   We set $A_{k}$ to be the algebra generated by: 
\begin{multline*}
\left\{e_{i,j}^{(2^{k})}\phi_{k,n}, \left(p_{1}\phi_{k,n}-\sum_{m=1}^{r_{k}(S_{k,n})}e_{m,m}^{(2^{k})}\phi_{k,n}\right),\right.\\
\left.\left(p_{2}\phi_{k,n}-\sum_{m=r_{k}(S_{k,n})+2}^{2^{k}}e_{m,m}^{(2^{k})}\phi_{k,n}\right)
|S_{k,n}\in P_{k,n},i\ne r_{k}(S_{k,n}),j\ne r_{k}(S_{k,n})\right\}.
\end{multline*}

By our representation of $p_{1}$, for $S_{k,n}\in P_{k}$ if  $i<r_{k}(S_{k,n})+1$ then  $e_{i,i}^{(2^{k})}\phi_{k,n}\leq p_{1}$ and if $i>r_{k}(S_{k,n})+1$, $e_{i,i}^{(2^{k})}\phi_{k+1,n}\leq p_{2}$, thus the algebra we get is $\bigoplus_{S_{k,n}\in P_{k}}\left(M_{2^{k}-1}\oplus \bbc\oplus\bbc\right)$.

Since $p_{1}\phi_{k,n}-\sum_{i=1}^{r_{k}(S_{k,n})}e_{i,i}^{(2^{k})}\phi_{k,n}$ is in $A_{k}$, and each of the elements of the sum is also in $A_{k}$, $p_{1}\phi_{k,n}\in A_{k}$ for each $S_{k,n}\in P_{k}$, and thus $p_{1}$ is, and by the same argument so is $p_{2}$ thus $D\subseteq A_{k}$.

To check that $A_{k-1}$ is a subalgebra of $A_{k}$, take any of the $e_{i,j}^{(2^{k-1})}\phi_{k-1,n}$ with $i,j\ne r_{k-1}(S_{k-1,n})$, then
\[ 
\sum_{n',S_{k,n'}\subseteq S_{k-1,n}}\left(e_{2i-1,2j-1}^{(2^{k})}\phi_{k,n'}+e_{2i,2j}^{(2^{k})}\phi_{k,n'}\right)=e_{i,j}^{(2^{k-1})}\phi_{k-1,n}.
\]
For $p_{1}\phi_{k-1,n}-\sum_{i=1}^{r_{k-1}(S_{k-1,n})}e_{i,i}^{(2^{k-1})}\phi_{k-1,n}$, we have already established that $p_{1}\phi_{k,n'}$ for any $n'$ is in $A_{k}$ and so since $p_{1}\phi_{k-1,n}=\sum_{n',S_{k,n'}\subseteq S_{k-1,n}}p_{1}\phi_{k,n'}$ it is in $A_{k}$.  Since all the terms in the sum are also in $A_{k}$, then  $p_{1}\phi_{k-1,n}-\sum_{i=1}^{r_{k-1}(S_{k-1,n})}e_{i,i}^{(2^{k})}\phi_{k-1,n}$ must be too.  Thus $A_{k-1}\subseteq A_{k}$.

Furthermore we see this is dense in $\mcm$ since our restriction on the measure of $S_{k,n}$ ensures ever greater refinement in our approximation of $L^{\infty}[\mu]$, and $R$ is approximated by $M_{2^{k+1}}$ with only one row and one column missing, whose contribution goes to zero as $k$ goes to infinity.

\end{proof}

Notation:  We denote $\Lambda^{0}(X,Y)$ to be the set of all non-empty alternating words of elements of $X$ and $Y$.  For an algebra $X$ with a trace, we use $X^{0}$ to denote the traceless elements of $X$ and if it has an expectation onto $D$, we use $X^{00}$ to denote the expectationless elements of $X$.

\begin{lemma}\label{gluelemma}

Let $\mcn=(M_{m}\oplus M_{n-m}\oplus B)*_{D}C$ and $\mcm=(M_{n}\oplus B)*_{D}C$, where $B$, $C$ are von Neumann algebras with finite trace and $D$ is finite dimensional and abelian.  We embed $\mcn$ in $\mcm$ by mapping $M_{m}$ and $M_{n-m}$ as blocks on the diagonal of $M_{n}$, and $B$ and $C$ by the identity.  Assume there exists a factor $\mathcal{F}$ with $M_{m}\oplus M_{n-m}\subseteq \mathcal{F}\subseteq \mcn$. Let $p_{i}^{D}$ be a minimal projection in $D$.  Then for any minimal projection $p\in M_{m}$ such that $p\leq p_{i}^{D}$, $p\mcn p *L(\bbz)\cong p\mcm p$

\end{lemma}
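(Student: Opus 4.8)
The plan is to produce a single Haar unitary $u\in p\mcm p$ that is $*$-free from the corner $p\mcn p$ and that together with $p\mcn p$ generates $p\mcm p$; since a Haar unitary generates a copy of $L(\bbz)$, this gives $p\mcm p\cong p\mcn p * L(\bbz)$. Observe first that, $p$ being minimal with $p\le p_{i}^{D}$, we have $pDp=\bbc p$, so the amalgamation collapses in the corner and the free product on the right is an ordinary (scalar) one, as the notation demands.

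To build $u$, note that $p$, being minimal in $M_{m}$, is minimal in $M_{n}$; pick a minimal projection $p'$ of $M_{n-m}$. In $\mcm$ the full algebra $M_{n}$ contains a matrix unit $v$ with $v^{*}v=p$ and $vv^{*}=p'$. The blocks of $\mcn$ are only block diagonal, and this is exactly where the hypothesis on $\mcf$ is used: since $\mcf$ is a factor containing $M_{m}\oplus M_{n-m}$ and the embedding $\mcn\hookrightarrow\mcm$ preserves the trace, $p$ and $p'$ have equal trace and hence are equivalent in $\mcf\subseteq\mcn$, so we may fix a partial isometry $w\in\mcf$ with $w^{*}w=p$ and $ww^{*}=p'$. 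A direct check gives that $u:=v^{*}w$ is a unitary in $p\mcm p$ and that $v=wu^{*}$. Generation is then easy: $M_{n}$ is generated by $M_{m}$, $M_{n-m}$ and the single off-diagonal matrix unit $v$, so $\mcm=W^{*}(\mcn,v)=W^{*}(\mcn,u)$; compressing by $p\in\mcn$ and using $u=pup$, every word in $\mcn$, $u$, $u^{*}$ sandwiched by $p$ reduces to a word in $p\mcn p$ and $u$, whence $p\mcm p=W^{*}(p\mcn p,u)$.

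The real content is to show that $u$ is a Haar unitary and is $*$-free from $p\mcn p$. Here I would work directly in $\mcm=(M_{n}\oplus B)*_{D}C$ with its trace preserving expectation $E=E_{D}$, using that the expectation of any nonempty word in $\Lambda^{0}((M_{n}\oplus B)^{00},C^{00})$ vanishes. Write $u=v^{*}w$ and $u^{*}=w^{*}v$, where $v\in(M_{n}\oplus B)^{00}$ (as $v$ is off-diagonal, $E(v)=0$) while $w\in\mcn$. Given an alternating word $a_{0}u^{k_{1}}a_{1}\cdots u^{k_{l}}a_{l}$ with the $a_{j}\in p\mcn p$ centered and $k_{j}\ne0$, one substitutes the factors $v^{\pm}$ and $w^{\pm}$, expands each $\mcn$-element (the $a_{j}$ and the $w^{\pm}$) into its own $D$-alternating form over $(M_{m}\oplus M_{n-m}\oplus B)^{00}$ and $C^{00}$, and collects everything into words in $\Lambda^{0}((M_{n}\oplus B)^{00},C^{00})$; evaluating $\tau$ then yields $0$, and taking all $a_{j}=p$ shows $\tau(u^{k})=0$ for $k\ne0$, i.e.\ that $u$ is Haar. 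I expect the main obstacle to be precisely the bookkeeping at the junctions where an off-diagonal letter $v^{\pm}$ abuts a block-diagonal leading or trailing letter of an adjacent $\mcn$-word: two such $M_{n}\oplus B$-letters can merge into one, possibly with nonzero expectation, and one must check that after every such collapse the surviving word is still a genuine nonempty alternating word, so that its expectation, and hence its trace, vanishes. This is the step that makes essential use of $v$ being off-diagonal and of the freeness of $M_{n}\oplus B$ from $C$ over $D$. Granting freeness and generation, we conclude
\[
p\mcm p=W^{*}(p\mcn p,u)=p\mcn p * W^{*}(u)\cong p\mcn p * L(\bbz).
\]
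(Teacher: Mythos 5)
Your setup coincides with the paper's own proof: there one takes $v\in\mcf$ with $v^{*}v=e_{11}=p$ and $vv^{*}=e_{nn}=p'$ (your $w$), sets $u=v^{*}e_{n1}$ (your $v^{*}w$, up to naming), and obtains $p\mcm p=W^{*}(p\mcn p,u)$ exactly as you do; so the construction of the Haar-unitary candidate and the generation argument are fine. The problem is that the $*$-freeness of $u$ from $p\mcn p$ --- which is the entire content of the lemma --- is in your writeup only a plan: you correctly identify the junction-collapse issue as ``the main obstacle'' and then conclude ``granting freeness.'' That is a genuine gap, and closing it is precisely where the hypotheses of the lemma (minimality of $p$, the block structure of $A=M_{m}\oplus M_{n-m}\oplus B$) get used.

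Two structural facts are needed to make your bookkeeping close. (a) Since $p$ and $p'$ are minimal projections of $A$ lying in \emph{distinct} direct summands, $pAp=\bbc p$, $p'Ap'=\bbc p'$, and $pAp'=p'Ap=0$. Hence for traceless $x\in p\mcn p$ one has $E_{A}(x)\in pAp=\bbc p$ with $\tau(E_{A}(x))=\tau(x)=0$, so $E_{A}(x)=0$; consequently $x$ (and likewise any traceless element of $p'\mcn p'$, and \emph{every} element of $p\mcn p'$ and $p'\mcn p$, in particular your $w^{\pm}$) is an SOT-limit of bounded sequences from $\spn\bigl(\Lambda^{0}(A^{00},C^{00})\setminus A^{00}\bigr)$. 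This guarantees that your expansions never contribute a bare single letter from $A^{00}$, and also that your ``centered'' $a_{j}$ are automatically expectationless rather than merely traceless, which the expansion requires. (b) $E_{D}(Ae_{n1}A)=0=E_{D}(Ae_{1n}A)$, because all such products are off-diagonal with respect to the block decomposition of $M_{n}$; hence whenever a boundary $A$-letter of an adjacent word merges with a matrix unit at a junction, the merged letter still lies in $(M_{n}\oplus B)^{00}$. Together (a) and (b) show that every word you form collapses into $\spn\Lambda^{0}\bigl((M_{n}\oplus B)^{00},C^{00}\bigr)$ --- a collapse never produces a letter with nonzero expectation and never empties the word --- so its $D$-expectation, and hence its trace, vanishes by freeness of $M_{n}\oplus B$ from $C$ over $D$ in $\mcm$. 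This is exactly how the paper completes the step you left open; without (a), i.e.\ without minimality of $p$ and $p'$ in $A$ and their orthogonality as summands, merged letters adjacent to corner elements could carry nonzero expectation and the word-by-word vanishing argument would break down.
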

\begin{proof}  

Denote $A=M_{m}\oplus M_{n-m}\oplus B$ and $A'=M_{n}\oplus B$.  We represent the matrices in such a way that $D$ embeds diagonally and $p=e_{11}$

Note $\mcm=vN(\mcn\cup {e_{1n}})$.  Now since $\mathcal{F}$ is a factor we can find a partial isometry $v\in\mathcal{F}\subseteq\mcn$ such that $v^{*}v=e_{11}$ and $vv^{*}=e_{nn}$.  Let $u=v^{*}e_{n1}$, and thus $uu^{*}=u^{*}u=e_{11}$.

Since $u$ and $\mcn$ generate $\mcm$ we know that $u$ and $e_{11}\mcn e_{11}$ generate $e_{11}\mcm e_{11}$.  We claim that $u$ and $e_{11}\mcn e_{11}$ are $*-$free with respect to the trace.  We must show that any element of $\Lambda^{0}(\{u^{k}|k\in\bbz\backslash \{0\}\},(e_{11}\mcn e_{11})^{0})$ has zero trace.

By breaking up $u$ into $v^{*}e_{n1}$, and noting $v\in \mcn$, and $v=e_{nn}ve_{11}$ we see that 
\[
\Lambda^{0}(\{u^{k}|k\in\bbz\backslash\{0\}\},(e_{11}\mcn e_{11})^{0})\subseteq \Lambda^{0}(\{e_{1n},e_{n1}\},\{(e_{11}\mcn e_{11})^{0},e_{11}\mcn e_{nn},e_{nn}\mcn e_{11},(e_{nn}\mcn e_{nn})^{0}\}).
\]

We can write $x-E_{D}^{\mcn}(x)$ for any $x\in\mcn$ as the SOT limit of a bounded sequence in the span of $\Lambda^{0}(A^{00},C^{00})$.  Then since  non-zero elements of $e_{11}Ae_{11}$ or $e_{nn}Ae_{nn}$ have non-zero trace, we can write elements of $(e_{11}\mcn e_{11})^{0}$ and $(e_{nn}\mcn e_{nn})^{0}$ as SOT limits of bounded sequences in the span of $\Lambda^{0}(A^{00},C^{00})\backslash A^{00}$ (thus the expectation is zero also).  Similarly note that since $e_{11}Ae_{nn}=0=e_{nn}Ae_{11}$, elements of $e_{11}\mcn e_{nn}$ and $e_{nn}\mcn e_{11}$ can be written in the same way.

Thus we can approximate any element of 
\[
\Lambda^{0}(\{e_{1n},e_{n1}\},\{(e_{11}\mcn e_{11})^{0},e_{11}\mcn e_{nn},e_{nn}\mcn e_{11},(e_{nn}\mcn e_{nn})^{0}\})
\] by bounded sequences in the span of $\Lambda^{0}(\{e_{n1},e_{1n}\},\spn(\Lambda^{0}(A^{00},C^{00})\backslash A^{00}))$.  Now note that $E_{D}(Ae_{n1}A)=E_{D}(Ae_{1n}A)=0$.  Thus 
\[
\Lambda^{0}(\{e_{n1},e_{1n}\},\spn(\Lambda^{0}(A^{00},C^{00})\backslash A))\subseteq \spn( \Lambda^{0}(A'^{00},C^{00})).
\]
By freeness, elements of $\Lambda^{0}(A^{00},C^{00})$ have expectation zero, and thus trace zero, and so $u$ and $e_{11}\mcn e_{11}$ are $*$-free and $u$ is a Haar unitary.

Thus $e_{11}\mcn e_{11}*L(Z)\cong e_{11}\mcm e_{11}$.

\end{proof}

\begin{lemma}\label{tensorlemma1}  Let $\mcm=((M_{n}\otimes A)\oplus B)*_{D}C$ and $\mcn=(M_{n}\oplus B)*_{D}C$ for $A,B$, and $C$ von Neumann algebras with finite trace, and $D$ a finite dimensional abelian von Neumann algebra, where $C,B, M_{n}$ have expectations onto $D$ and where $E_{D}^{M_{n}\otimes A}=E_{D}^{M_{n}}\otimes \tau_{A}$.  Let $p$ be a minimal projection in $M_{n}$ which sits under a minimal projection in $D$.  Then $p\mcn p*A\cong p\mcm p$
\end{lemma}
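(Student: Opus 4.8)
The plan is to exhibit $p\mcm p$ as generated by two free subalgebras isomorphic to $p\mcn p$ and to $A$, with freeness proved by a reduced-word computation parallel to Lemma~\ref{gluelemma}. First I would represent the matrices so that $D$ is diagonal and $p=e_{11}$, and embed $\mcn\hookrightarrow\mcm$ by $M_n\ni x\mapsto x\otimes 1$ together with the identity on $B$ and $C$; equip $p\mcm p$ with the normalized trace $\tau(\cdot)/\tau(p)$. Since $p$ is minimal in $M_n$, the corner $p(M_n\otimes A)p=e_{11}\otimes A$ is a copy of $A$, and $\tau_{\mcm}(e_{11}\otimes a)/\tau(p)=\tau_A(a)$ shows it carries exactly $\tau_A$; thus $e_{11}\otimes A\cong A$ and $p\mcn p$ are the two subalgebras of $p\mcm p$ I will work with.

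Next I would check generation, $p\mcm p=vN(p\mcn p,\,e_{11}\otimes A)$. With $q=1_{M_n}$ the identities $(e_{i1}\otimes 1)(q\otimes a)(e_{1j}\otimes 1)=e_{ij}\otimes a$ give $\mcm=vN(\mcn,\,q\otimes A)$. Compressing by $q$ and using the matrix units $e_{ij}\otimes 1\in\mcn$ yields the corner isomorphism $q\mcm q\cong M_n\otimes p\mcm p$, under which $q\mcn q$ corresponds to $M_n\otimes p\mcn p$ and $q\otimes a$ to $1_{M_n}\otimes(e_{11}\otimes a)$. Comparing the von Neumann algebras generated on the two sides gives the desired generation.

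The main step is freeness of $p\mcn p$ and $e_{11}\otimes A$ in $(p\mcm p,\tau/\tau(p))$, for which I would show every word in $\Lambda^{0}((e_{11}\otimes A)^{0},(p\mcn p)^{0})$ has trace zero. Write $N_1=M_n\oplus B$ and $N_1'=(M_n\otimes A)\oplus B$ for the first free factors of $\mcn$ and $\mcm$. As in Lemma~\ref{gluelemma}, elements of $(p\mcn p)^{0}$ are SOT limits of bounded sequences in $\spn(\Lambda^{0}(N_1^{00},C^{00})\setminus N_1^{00})$ compressed to the $e_{11}$-corner, while each $(e_{11}\otimes A)^{0}$-letter is $e_{11}\otimes a$ with $\tau_A(a)=0$, which lies in $(M_n\otimes A)^{00}\subseteq (N_1')^{00}$ precisely because $E_D^{M_n\otimes A}=E_D^{M_n}\otimes\tau_A$. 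The decisive point is that when such a letter meets an $N_1$-letter $n=n_M\oplus n_B$ of one of the reduced $\mcn$-words, the product collapses to the single first-factor element $n_M e_{11}\otimes a$ (the $B$-part dies by orthogonality), and this is still expectationless because $\tau_A(a)=0$; an adjacent $C$-letter instead simply preserves the alternation. Performing these merges throughout rewrites the word as a combination of genuine reduced words of $\mcm$ in $\spn(\Lambda^{0}((N_1')^{00},C^{00}))$; since each $(p\mcn p)^{0}$-factor carries at least one $C$-letter, the result has length at least two, so by freeness over $D$ in $\mcm$ its expectation $E_D^{\mcm}$, and hence its trace, vanishes.

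I expect the main obstacle to be precisely this reduced-word bookkeeping under the corner compression: one must control the boundary letters arising when $e_{11}\cdot(\text{letter})$ splits into a $D$-part plus an expectationless part, and verify case by case that every merged letter stays expectationless so that the rewritten word is genuinely reduced in $\mcm$. Granting this, freeness together with the generation established above yields $p\mcm p\cong p\mcn p * A$.
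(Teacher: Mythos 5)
Your proposal is correct and follows essentially the same route as the paper's proof: generation of $p\mcm p$ by $p\mcn p$ together with $pA=e_{11}\otimes A$, then freeness by rewriting alternating words from $\Lambda^{0}((pA)^{0},(p\mcn p)^{0})$ as limits of spans of reduced words in $\Lambda^{0}(((M_{n}\otimes A)\oplus B)^{00},C^{00})$, with the decisive observation that $x(e_{11}\otimes a)y$ is expectationless for $x,y\in M_{n}$ and $\tau_{A}(a)=0$ because $E_{D}^{M_{n}\otimes A}=E_{D}^{M_{n}}\otimes\tau_{A}$. The only cosmetic difference is your generation step via $q\mcm q\cong M_{n}\otimes p\mcm p$, where the paper uses a one-line compression argument ($pA$ and $\mcn$ generate $\mcm$, hence $pA$ and $p\mcn p$ generate $p\mcm p$).
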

\begin{proof}
Since $pA$ and $\mcn$ generate $\mcm$ we see that $p\mcn p$ and $A$ embedded as $pA$ generate $p\mcm p$.  Thus we need only show that these two algebras are $*$-free.

As in the previous proof, we see that traceless elements of $p\mcn p$ are expectationless and traceless elements of $p(M_{n}\oplus B)p$ are zero, so elements of $p\mcn p$ can be approximated by $\spn(\Lambda^{0}((M_{n}\oplus B)^{00},C^{00})\backslash(M_{n}\oplus B)^{00})$.  It is easy to see that traceless elements of $pA$ are also expectationless.  

Any traceless element of $pA$ multiplied on both left and right by elements of $M_{n}$ is expectationless (since each entry has zero trace in $A$).  Thus any element of $\Lambda^{0}((pA)^{0},(p\mcn p)^{0})$ can be approximated by words in the span of $\Lambda^{0}(((M_{n}\otimes A)\oplus B)^{00},C^{00})$, and thus expectationless and traceless, and thus $pA$ and $p\mcm p$ are $*-$free.
\end{proof}

\begin{defn}  We call an embedding $\phi:\mcm\to\mcn$ a \emph{simple step} if it follows one of the two following patterns:
\begin{enumerate}
\item $\mcn=\overset{p_{1}}{A}\oplus \overset{q}{B}$, $\mcm=\overset{p_{2}}{A}\oplus \overset{p_{3}}{A}\oplus \overset{q}{B}$, with $p_{1}=p_{2}+p_{3}$, and $\phi(a,b)=(a,a,b)$.
\item $\mcn=\underset{t}{M_{n}}\oplus \underset{t}{M_{m}}\oplus \overset{p}{B}$, $\mcm=\underset{t}{M_{n+m}}\oplus\overset{p}{B}$ where $\phi(x,y,b)=\left(\left[\begin{matrix} x & 0 \\ 0 & y\\\end{matrix}\right],b\right)$.
\end{enumerate}
\end{defn}

\begin{lemma}\label{decomplemma}  If we have two finite dimensional von Neumann algebras $\mcn$ and $\mcm$ and a trace preserving embedding $\phi:\mcn\to\mcm$, then $\phi$ can be written a the composition of as sequence of simple steps.

\end{lemma}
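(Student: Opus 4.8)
The plan is to reduce the statement to the combinatorics of inclusion matrices and then to realize the embedding inside $\mcm$ as an increasing chain of finite dimensional subalgebras, each inclusion being a composition of simple steps. Write $\mcn=\bigoplus_{i=1}^{r}\underset{t_i}{M_{n_i}}$ and $\mcm=\bigoplus_{j=1}^{s}\underset{\tilde t_j}{M_{m_j}}$, and let $\Lambda=(\lambda_{ij})$ be the inclusion matrix of $\phi$, where $\lambda_{ij}$ records the multiplicity with which the $i$-th summand of $\mcn$ maps into the $j$-th summand of $\mcm$. First I would record the two numerical constraints. Since the trace has total mass $1$ on both algebras and $\phi$ is trace preserving, $\phi(1_{\mcn})$ is a projection of trace $1$ in $\mcm$; as $\tau_{\mcm}$ is faithful this forces $\phi(1_{\mcn})=1_{\mcm}$, so $\phi$ is automatically unital and $\sum_i\lambda_{ij}n_i=m_j$ for every $j$. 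Applying trace preservation to a minimal projection of the $i$-th summand gives $t_i=\sum_j\lambda_{ij}\tilde t_j$ for every $i$. Working with the image $\phi(\mcn)\subseteq\mcm$, it then suffices to produce an intermediate subalgebra $\mcn'$ with $\phi(\mcn)\subseteq\mcn'\subseteq\mcm$ for which each of the two inclusions is a composition of simple steps; this avoids any appeal to unitary conjugacy, since the traces on $\mcn'$ are simply the ones inherited from $\mcm$.

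For the splitting phase I would fix, inside each target block $M_{m_j}$, a system of matrix units adapted to $\phi$, and let $\mcn'_j\subseteq M_{m_j}$ be the block-diagonal copy of $\bigoplus_i M_{n_i}^{\oplus\lambda_{ij}}$ that $\phi$ determines there, with $\mcn'=\bigoplus_j\mcn'_j$. Within $\mcn'$, the $i$-th summand of $\phi(\mcn)$ sits diagonally across all $\sum_j\lambda_{ij}$ of its copies, so $\phi(\mcn)\subseteq\mcn'$ is a diagonal inclusion of each matrix block $M_{n_i}$ into $\sum_j\lambda_{ij}$ isomorphic copies. Such a diagonal inclusion factors as $\sum_j\lambda_{ij}-1$ successive applications of pattern~(1): at each stage one peels off a single copy of $M_{n_i}$ from the remaining diagonal block, treating all other summands as the ``$B$'' of the pattern. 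The required relation $p_1=p_2+p_3$ on the central projections holds automatically, because the two pieces are genuine subprojections inside $\mcm$ whose traces add; no separate trace computation is needed.

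For the merging phase I would observe that, inside $M_{m_j}$, every minimal projection has trace $\tilde t_j$, so each block $M_{n_i}$ making up $\mcn'_j$ has minimal-projection trace $\tilde t_j$ as well. This is exactly the hypothesis of pattern~(2), so the block-diagonal inclusion $\mcn'_j\hookrightarrow M_{m_j}$ can be realized by $\bigl(\sum_i\lambda_{ij}\bigr)-1$ successive merges, each combining two matrix summands of equal minimal trace $\tilde t_j$ into a single larger block; the final block has size $\sum_i\lambda_{ij}n_i=m_j$, as required. Performing this for every $j$ exhibits $\mcn'\hookrightarrow\mcm$ as a composition of pattern~(2) steps, and the composite $\phi(\mcn)\subseteq\mcn'\subseteq\mcm$ recovers $\phi$.

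The genuinely technical point, and the step I expect to require the most care, is the construction of $\mcn'$: one must extract from the given embedding $\phi(\mcn)\subseteq\mcm$ a compatible system of matrix units in each $M_{m_j}$ exhibiting the block-diagonal copy of $\bigoplus_i M_{n_i}^{\oplus\lambda_{ij}}$, and then verify that the resulting two inclusions really do split into binary simple steps. This is standard Bratteli-diagram bookkeeping for inclusions of finite dimensional algebras, and once it is set up the trace conditions defining the simple steps are automatic, since all the intermediate algebras are subalgebras of $\mcm$ carrying the restricted trace.
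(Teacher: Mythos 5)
Your proposal is correct and takes essentially the same route as the paper's proof: first use pattern~(1) steps to split each summand $M_{n_i}$ of $\mcn$ into copies indexed by the edges of the Bratteli diagram counted with multiplicity, then use pattern~(2) steps to merge, for each target summand $M_{m_j}$, the resulting blocks of equal minimal trace $\tilde t_j$. Your realization of the intermediate algebras as honest subalgebras of $\mcm$ (so the traces are inherited, rather than assigned edge-by-edge as $\tau(p_iq_j)/\lambda_{ij}$ as the paper does) is only a bookkeeping refinement of the same decomposition.
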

\begin{proof}
Since $\mcn$ and $\mcm$ are both finite dimensional we can write them as direct sums of matrix algebras, $\mcn=\overset{p_{1}}{M_{n_{1}}}\oplus\dots\oplus \overset{p_{s}}{M_{n_{s}}}$ and $\mcm=\overset{q_{1}}{M_{m_{1}}}\oplus\dots\oplus \overset{q_{t}}{M_{m_{t}}}$.  Then consider the Bratteli diagram for this embedding, $G=(V,E)$, $V=\{p_{i},q_{j}|1\leq i\leq s,1\leq j\leq t\}$.  For each edge linking $p_{i}$ to $q_{j}$ we decorate it as usual with the partial multiplicity of the embedding, $a_{i,k}$, in addition we decorate it with a number $\alpha_{i,j}$, which is the trace of $p_{i}q_{j}$.

Then to break this map into simple steps, we start by making copies of the $M_{n_{j}}$ by using the first type of simple step, so that for each edge $e_{i,j}$ there are $a_{i,j}$ copies of $M_{n_{j}}$, each with trace $\alpha_{i,j}/a_{i,j}$.  This leaves us with a multimatrix algebra, where each matrix algebra is associated with an edge $e_{i,j}$ (which is associated to $a_{i,j}$ matrix algebras).  Then for every $1\leq j\leq t$, we use the second type of simple step to join together all the matrix algebras associated to edges ending in $j$.

\end{proof}

\begin{defn}\label{graphdef}  We define the graph $G_{D}^{A}$ of a hyperfinite von Neumann Algebra $A$ and multimatrix subalgebra $D=\oplus_{i\in I_{D}}\overset{p_{i}^{D}}{M_{n_{i}}}$ as follows.  Our vertex set is $I_{D}$.  Vertices $i,j\in I_{D}$ are connected by an edge if $p_{j}Ap_{i}\ne 0$.

We also use $G_{D}^{A,B}$ to denote the union of the graphs $G_{D}^{A}$ and $G_{D}^{B}$, where $B$ is another von Neumann algebra with $D$ as a subalgebra.  

\end{defn}
Remark:  Note the graph as defined for multimatrix algebras in \cite{kenAmJM} is connected if and only if the graph in Definition \ref{graphdef} is connected, however it is not the same graph.  

\section{Main Results}

\begin{propn}\label{mainprop}
Let $A$ and $B$ be hyperfinite von Neumann algebras with finite trace, and let $D$ be a finite dimensional subalgebra of both, so that the graph $G_{D}^{A,B}$ is connected, and so that no minimal projection in $D$ is also minimal in $A$ or $B$.  Then $A*_{D}B=F\oplus_{r=1}^{R}M_{n_{r}}$ where $F$ is either an interpolated free group factor or a diffuse type I hyperfinite algebra.  Furthermore $\fdim(A*_{D}B)=\fdim(A)+\fdim(B)-\fdim(D)$.
\end{propn}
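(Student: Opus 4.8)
The plan is to realize $A*_{D}B$ as an inductive limit of amalgamated free products of finite-dimensional algebras over $D$, whose structure and free dimension are already known from \cite{kenAmJM}, and then to pass to the limit using the theory of standard embeddings from \cite{kenDuke}. First I would reduce to the case that $D$ is abelian. Writing $D=\bigoplus_{i}M_{n_{i}}$ and letting $f$ be the sum of one minimal (rank-one) projection from each block, a compression argument identifies $f(A*_{D}B)f$ with $(fAf)*_{fDf}(fBf)$, where $fDf$ is abelian; one checks that the connectivity of $G_{D}^{A,B}$ and the hypothesis that no minimal projection of $D$ is minimal in $A$ or $B$ pass to the compressed data (using that $p_{j}Ap_{i}\ne 0$ is equivalent to the corresponding statement for the chosen minimal projections), and that the asserted direct-sum form and the free-dimension formula for $f(A*_{D}B)f$ dilate back to $A*_{D}B$. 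So assume $D$ is abelian.

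By Lemma \ref{chainlemma}, write $A$ and $B$ as the weak closures of increasing unions $\bigcup_{n}A_{n}$ and $\bigcup_{n}B_{n}$ of finite-dimensional subalgebras, each containing $D$. Since $\bigcup_{n}(A_{n}*_{D}B_{n})$ is strongly dense in $A*_{D}B$, the latter is the inductive limit of the finite-dimensional amalgamated free products $\mcm_{n}:=A_{n}*_{D}B_{n}$. By \cite{kenAmJM} each $\mcm_{n}$ has the form $F_{n}\oplus\bigoplus_{r}M_{k_{r}^{(n)}}$, with $F_{n}$ an interpolated free group factor (or a finite-dimensional algebra), and $\fdim(\mcm_{n})=\fdim(A_{n})+\fdim(B_{n})-\fdim(D)$.

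Next I would show that the connecting inclusions $\mcm_{n}\hookrightarrow\mcm_{n+1}$ restrict to standard embeddings on the free-group-factor parts. By Lemma \ref{decomplemma} each inclusion $A_{n}\hookrightarrow A_{n+1}$ (and similarly for $B$) factors as a composition of simple steps; after taking the free product with the other algebra over $D$ and compressing by a minimal projection $p\leq p_{i}^{D}$, each gluing of matrix blocks is governed by Lemma \ref{gluelemma} and frees in a copy of $L(\bbz)$, while the refinement of the type~I and hyperfinite parts is governed by Lemma \ref{tensorlemma1} and frees in a finite-dimensional or hyperfinite factor; in each case the corner of $\mcm_{n+1}$ is a free product of the corner of $\mcm_{n}$ with such an algebra. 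By the second property of standard embeddings each such free product induces a standard inclusion; by the first property this standardness transfers between the factor $F_{n}$ and its compression; and by the third property the composition over all the simple steps is again standard.

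Finally, I would pass to the limit. By the fourth property of standard embeddings the inductive limit of the $F_{n}$ is an interpolated free group factor $L(F_{s})$ with $s=\lim_{n}s_{n}$ (or a diffuse type~I hyperfinite algebra in the degenerate case); the connectivity of $G_{D}^{A,B}$ guarantees that this free part is a single factor $F$ rather than a direct sum, and the hypothesis that no minimal projection of $D$ is minimal in $A$ or $B$ guarantees that $F$ is diffuse, while the finitely many surviving matrix summands stabilize to $\bigoplus_{r=1}^{R}M_{n_{r}}$. The free-dimension identity then follows by continuity: since $\fdim(A_{n})\to\fdim(A)$ and $\fdim(B_{n})\to\fdim(B)$ and $\fdim$ is continuous along inductive limits built from standard embeddings,
\[
\fdim(A*_{D}B)=\lim_{n}\fdim(\mcm_{n})=\fdim(A)+\fdim(B)-\fdim(D).
\]
The main obstacle will be the third paragraph: verifying that the connecting maps are genuinely standard embeddings, which requires matching the corner computations of Lemmas \ref{gluelemma} and \ref{tensorlemma1} with the free-dimension bookkeeping from \cite{kenAmJM} and carefully tracking how matrix summands merge into the factor part as $n$ grows, all while using connectivity to ensure that a single factor survives in the limit.
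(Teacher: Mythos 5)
Your skeleton---reduce to abelian $D$, exhaust by finite stages, apply the multimatrix results of \cite{kenAmJM} at each stage, and pass to the limit via standard embeddings---is the same as the paper's, and your first paragraph and your final limiting argument are fine. But there is a genuine gap exactly at the step you yourself flag as ``the main obstacle,'' and it is not a verification to be carried out later: it is the central idea of the proof, and your setup makes it false rather than merely hard. Because you approximate \emph{all} of $A$ and $B$ by finite dimensional subalgebras $A_{n},B_{n}\supseteq D$, the matrix summands of $\mcm_{n}=A_{n}*_{D}B_{n}$ need not stabilize: by Theorem 5.1 of \cite{kenAmJM}, matrix summands at stage $n$ arise from pairs of minimal projections $p\leq p_{k}^{D}$ in $A_{n}$ and $q\leq p_{k}^{D}$ in $B_{n}$ whose normalized traces sum to more than $t_{k}^{D}$, and any finite dimensional approximation of the diffuse part (or of an infinite atomic part) of $A$ is forced to contain such ``large'' minimal projections at early stages, even though these projections vanish in the limit. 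These spurious summands must merge into the free part at later stages, so the connecting map $\mcm_{n}\to\mcm_{n+1}$ is \emph{not} of the form you need (identity on a fixed matrix part, standard embedding on a single free part). Concretely, Lemma \ref{gluelemma} cannot even be invoked for a simple step gluing two blocks that sit in matrix summands of $\mcm_{n}$, since its hypothesis requires a factor $\mathcal{F}$ with $M_{m}\oplus M_{n-m}\subseteq\mathcal{F}\subseteq\mcn$; and property (4) of standard embeddings concerns a chain of interpolated free group factors with standard connecting maps, and says nothing about chains in which matrix (or hyperfinite) summands are created and absorbed along the way. Tracking such mergers is essentially the content of the harder Theorem \ref{maintheorem}, not of this proposition.

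What the paper does instead---and what your proposal is missing---is to arrange that mergers never happen. Write $A=A_{d}\oplus A_{a}$ and $B=B_{d}\oplus B_{a}$ (diffuse and atomic parts), keep $A_{a}$ and $B_{a}$ whole, so that the approximants $A(i)=A_{d}(i)\oplus A_{a}$ are multimatrix but in general infinite dimensional (which Theorem 5.1 of \cite{kenAmJM} still covers), and apply Lemma \ref{chainlemma} only to the diffuse parts, choosing the chains so that every minimal projection $p\in A_{d}(i)$ with $p\leq p_{k}^{D}$ satisfies $\tau(p)<t_{k}^{D}-\tau(p')$ for every minimal projection $p'\in B$ with $p'\leq p_{k}^{D}$; this choice is possible precisely because no $p_{k}^{D}$ is minimal in $B$. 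This quantitative condition forces every pair contributing a matrix summand of $M(i,j)=A(i)*_{D}B(j)$ to come from $A_{a}\times B_{a}$, so the matrix part is literally constant in $(i,j)$, and the connector and trace estimates then show that all of $A_{d}(i)$ and $B_{d}(j)$ lie in a single non-matrix summand $F_{i,j}$. Note that this is also where the hypothesis that no minimal projection of $D$ is minimal in $A$ or $B$ does its real work (it yields the strict inequalities making the connectors link the summands $F_{k}$ into one factor), not, as you suggest, merely to ensure that $F$ is diffuse. Only after all of this do Lemmas \ref{tensorlemma1} and \ref{gluelemma} apply to every simple step, producing a bona fide chain of standard embeddings $F_{i,j}\to F_{i+1,j}$ to which property (4) and the free dimension bookkeeping of \cite{kenDuke} can be applied.
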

\begin{proof}  
The proof of Lemma 5.2 from \cite{kenAmJM} shows that we may assume without loss of generality that $D$ is abelian.

Write $A=\overset{p_{d}^{A}}{ A_{d}}\oplus \overset{p_{a}^{A}}{A_{a}}$ and $B=\overset{p_{d}^{B}}{B_{d}}\oplus\overset{p_{a}^{B}}{B_{a}}$, where $A_{a}$ and $B_{a}$ are the type I   atomic parts, and $A_{d}$ and $B_{d}$ the diffuse parts.  Write $D=\oplus_{k\in I_{D}}\overset{p_{k}^{D}}{\underset{t^{D}_{k}}{\bbc}}$, $A_{a}=\oplus_{\ell\in I_{A_{a}}}\underset{t_{\ell}}{M_{n_{\ell}}}$, and $B_{a}=\oplus_{\ell\in I_{B_{a}}}\underset{t_{\ell}}{M_{n_{\ell}}}$, with $I_{D}$, $I_{A_{a}}$, and $I_{B_{a}}$ disjoint.

Now using Lemma \ref{chainlemma} we can construct a sequence $A_{d}(i)$ of finite dimensional subalgebras of $A_{d}$ containing $p_{d}^{A}Dp_{d}^{A}$ and whose inductive limit is $A_{d}$.  Denote $A_{d}(i)=\oplus _{\ell\in I_{A_{d}(i)}}\overset{p_{\ell}}{\underset{t_{\ell}}{M_{n_{\ell}}}}$, and $B_{d}(j)=\oplus _{\ell\in I_{B_{d}(i)}}\overset{p_{\ell}}{\underset{t_{\ell}}{M_{n_{\ell}}}}$.  Note $I_{D},I_{A_{d}(i)},$ and $I_{B_{d}(i)},$ are finite, but $I_{A_{a}}$ and $I_{B_{a}}$ may not be. 

Also, since $A_{d}$ has no minimal projections, we can choose the $A_{d}(i)$ in such a way that the trace of the any minimal projection $p\in A_{d}(i)$, $p\leq p_{k}^{D}$ is less than $t_{k}^{D}-\tau(p')$ for any minimal projection  $p'\in B$, $p'\leq p_{k}^{D}$.  Note we can do this, since we know the minimal projections $p'\in B, p'\leq p_{k}^{D}$ have traces adding to $t_{k}^{D}$, and they must be strictly less than $t_{k}^{D}$.  We can use Lemma \ref{decomplemma} to make sure the steps between $A_{d}(i)$ and $A_{d}(i+1)$ are all simple steps.  We construct $B_{d}(j)$ in the same way.  Denote $A(i)=A_{d}(i)\oplus A_{a}$ and $B(j)=B_{d}(j)\oplus B_{a}$.  Note $A(i)=\oplus_{\ell\in I_{A(i)}}\underset{t_{\ell}}{M_{n_{\ell}}}$ where $I_{A(i)}=I_{A_{d}(i)}\cup I_{A_{a}}$.  Start our sequences far enough along that $G_{D}^{A(i),B(j)}=G_{D}^{A,B}$.

First, we claim that for any $i,j$ 
\[
M(i,j)=A(i)*_{D}B(j)=\overset{p_{0}^{M}}{F_{i,j}}\oplus\bigoplus_{r=1}^{R}\overset{p^{M}_{r}}{M_{n_{r}}},
\]
where $F_{i,j}$ is either an interpolated free group factor or a diffuse type I hyperfinite algebra.  We will also show that only the $F_{i,j}$  depends on $i$ and $j$.  Then we shall show that $F_{i,j}\to F_{i+1,j}$ is standard.

We begin by examining the construction in Theorem 5.1 in \cite{kenAmJM}.  For this we construct $N_{i,j}(k)=vN(p_{k}^{D}A(i)p_{k}^{D}\cup p_{k}^{D}B(j)p_{k}^{D})$ for each $k$.  This is of the form $F_{k}\oplus\bigoplus_{y\in Y(k)}M_{n_{y}}(\bbc)$, where $F_{k}$ is either an interpolated free group factor or a diffuse type I hyperfinite algebra.  Here each element of $y\in Y(k)$ corresponds to a pair $(\ell,\ell')$, $\ell\in I_{A(i)},\ell'\in I_{B(j)}$, representing a pair of matrix algebras in $A(i)$ and $B(j)$, so that $t_{\ell}/\lambda_{\ell,k}+t_{\ell'}^{B}/\lambda_{\ell',k}>t_{k}^{D}$, where $\lambda_{\ell,k}$ is the partial multiplicity of $p_{k}^{D}$ in $M_{n_{\ell}}$ (i.e. $M_{\lambda_{\ell,k}}\cong p_{k}^{D}M_{n_{\ell}}p_{k}^{D}$), and the same for $\lambda_{\ell',k}$.  Importantly, because of our choice of $A_{d}(i)$ and $B_{d}(j)$, if $t_{\ell}/\lambda_{\ell,k}+t_{\ell'}^{B}/\lambda_{\ell',k}>t_{k}^{D}$ then $\ell\in I_{A_{a}}$ and $\ell'\in I_{B_{a}}$.

Define a \emph{connector} $v$ to be a partial isometry in  $A(i)$ (or $B(j)$) so that $vv^{*}$ and $v^{*}v$ are minimal in $A(i)$ (or $B(j)$) and so $vv^{*}\leq p_{k}^{D}$ and $v^{*}v\leq p_{k'}^{D}$ for some $k,k'\in I_{D}$.

All further construction affecting the matrix algebras of $M(i,j)$ is based on those $Y(k)$s and connectors between them.  Thus, these are entirely determined by the $A_{a}$ and $B_{a}$, and thus remain unchanged from $M(i,j)$ to $M(i+1,j)$.

Now we show that the various summand factors $F_{k}$ are linked by connectors.  To do this we show that for any connector $v$ in $A(i)$ or $B(j)$ with $vv^{*}\leq p_{k}^{D}$, $vv^{*}$ is not orthogonal to $F_{k}$.  Without loss of generality, assume $v\in A(i)$.  Then $vv^{*}\in N_{i,j}(k)$, and is a minimal projection in $p_{k}^{D}A(i)p_{k}^{D}$.  Using Theorem 3.2 in \cite{kenAmJM}, we can determine the traces of the matrix factor summands of $N_{i,j}(k)$ which are not orthogonal to $vv^{*}$.  Let $t=\tau(vv^{*})$, and $\lambda$ the partial multiplicity of $p_{k}^{D}$ in the factor summand of $A(i)$ containing $vv^{*}$.  Then the matrix factor summands of $N_{i,j}(k)$ which are not orthogonal to $vv^{*}$ correspond to those factor summands $\underset{t_{\ell,k}}{M_{\lambda_{\ell,k}}}$ of $p_{k}^{D}B(j)p_{k}^{D}$ with $t/\lambda+t_{\ell}/\lambda_{\ell,k}>t_{k}^{D}$.  Note this can only happen if at least one of $\lambda$ or $\lambda_{\ell,k}$ equals 1.  Let $L\subseteq I_{B(j)}$ be the set of $\ell$ satisfying this inequality.  It is easy to check that $L$ is finite.

First let us assume $\lambda=1$.  Then for each $\ell\in L$ we have a matrix algebra $\underset{\lambda_{\ell,k}(t+\frac{t_{\ell}}{\lambda_{\ell,k}}-t_{k}^{D})}{M_{\lambda_{\ell,k}}}$.  Thus the total trace of these is
\[
\sum_{\ell\in L}\lambda_{\ell,k}^{2}(t+\frac{t_{\ell}}{\lambda_{\ell,k}}-t_{k}^{D})
\leq t\left(\sum_{\ell\in L}\lambda_{\ell,k}^{2}\right)+t_{k}^{D}-t_{k}^{D}\left(\sum_{\ell\in L}\lambda_{\ell,k}^{2}\right)
=t-(t_{k}^{D}-t)(\sum_{\ell\in L}\lambda_{\ell,k}^{2}-1),
\]
since $\lambda_{\ell,k}t_{\lambda_{\ell,k}}=\tau(p_{k}^{D}I_{M_{\ell}}p_{k}^{D})$, and $p_{k}^{D}I_{M_{\ell}}p_{k}^{D}\leq p_{k}^{D}$.  Now since $t_{k}^{D}>t$, unless $|L|=1$ and $\lambda_{\ell,k}=1$ this is strictly less than $t$.  If $\lambda_{\ell,k}=1$, and $|L|=1$, the first inequality holds strictly, since otherwise $\tau(p_{k}^{D})=\tau(p_{k}^{D}I_{M_{\ell}}p_{k}^{D})$ and this is minimal in $B(j)$, thus $p_{k}^{D}$ is minimal in $B$, contradicting our assumption.  This shows the total trace of the matrix factor summands of $N_{i,j}(k)$ not orthogonal to $vv^{*}$ is strictly less than that of $vv^{*}$, and thus $F_{k}$ is not orthogonal to $vv^{*}$.

If instead $\lambda>1$ and thus $\lambda_{\ell,k}=1$ for each $\ell\in L$.  Let $N=|L|$, then for each such $\ell\in L$ we have a matrix algebra $\underset{\lambda(\frac{t}{\lambda}+t_{\ell}-t_{k}^{D})}{M_{\lambda}}$.  Thus the total trace of these is:
\[
\sum_{\ell\in L}\lambda^{2}(\frac{t}{\lambda}+t_{\ell}-t_{k}^{D})\leq tN\lambda+\lambda^{2}t_{k}^{D}-\lambda^{2}Nt_{k}^{D}=t\lambda-\lambda(N-1)(\lambda t_{k}-t),
\]
which is then strictly less than $t\lambda$ if $N>1$.  As before, if $N=1$, then the first inequality holds strictly, otherwise $p_{k}^{D}$ would be minimal in $B$.  In this case, since $vv^{*}$ is a minimal projection in $M_{\lambda}$ as a factor summand of $N_{i,j}(k)$,   we can form $p_{1},\dots, p_{\lambda}$, minimal projections in $N_{i,j}(k)$ which are all equivalent to $vv^{*}$ in $N_{i,j}(k)$ and mutually orthogonal.  Thus the trace of their sum is $t\lambda$, which we have established is strictly greater than the total trace of all the matrix factor summands not orthogonal to them, and so they must not be orthogonal to $F_{k}$, and so $vv^{*}$ is not either.

The construction of $M(i,j)$ in Theorem 5.1 in \cite{kenAmJM}, proceeds by taking $N_{i,j}=\oplus_{k\in I_{D}}N_{i,j}(k)$, and adjoining connectors.  Let $S$ denote the set of connectors added, and let $N_{i,j}(S')=vN(N_{i,j}\cup S')$, for any $S'\subseteq S$.  Thus $M(i,j)=N_{i,j}(S)$.  Let $S_{v}$ be the set of connectors added before $v\in S$.  From the above, for any $v\in S$ we know that neither $vv^{*}$ nor $v^{*}v$ are minimal in $N_{i,j}$, thus when move from $N_{i,j}(S_{v})$ to $N_{i,j}(S_{v}\cup \{v\})$, we apply Lemma 4.2 in \cite{kenAmJM}, to determine $\bar{v}N_{i,j}(S_{v}\cup \{v\})\bar{v}$ where $\bar{v}=vv^{*}+v^{*}v$.  This shows that it contains exactly one free group factor (it can't be a hyperfinite, since we know $\dim\bar{v}N_{i,j}(S_{v})\bar{v}=\infty>4$).  Thus for any $v\in S$,  $vv^{*}\leq p_{k}^{D}$ and $v^{*}v\leq p_{k'}^{D}$ for $k,k'\in I_{D}$, then $F_{k}$ and $F_{k'}$ are contained in some $F$ which is an interpolated free group factor summand of $M(i,j)$ (noting if $F_{k}$ is hyperfinite, then $p_{k}^{D}A(i)p_{k}^{D}=\overset{p_{1}}{\bbc}\oplus\overset{p_{k}^{D}-p_{1}}{\bbc}$ and $p_{k}^{D}B(i)p_{k}^{D}=\underset{p_{2}}{\bbc}\oplus\overset{p_{k}^{D}-p_{2}}{\bbc}$, and thus $vv^{*}\in \{p_{1},p_{2},1-p_{1},1-p_{2}\}$.  By Theorem 1.1 in \cite{kenDuke} all of these have central support which contains $I_{F_{k}}$).  Then since the graph $G_{D}^{A(i),B(j)}$ is connected, there is exactly one free group factor summand of $M(i,j)$, unless, $A=\bbc\oplus\bbc$, $B=\bbc\oplus\bbc$ and $D=\bbc$, in which case it is a type I diffuse hyperfinite algebra.

Next we show that the inclusion of $A_{d}(i)$ into $A_{d}(i+1)$ is standard.  Since it is a simple step it is one of two types.  First it  could make two copies of a matrix algebra $M_{n_{\ell}}$ in $A_{d}(i)$ contained in the free group factor summand $F_{i,j}$ in $M(i,j)$.  Lemma \ref{tensorlemma1} shows that $F_{i,j}\to F_{i+1,j}$ is a standard embedding.

Alternately it could be the addition of a connector, connecting $M_{n_{\ell}},M_{n_{\ell'}}\in A_{d}(i)$, giving us $M_{\ell''}\in A_{d}(i+1)$.  Since $M_{n_{\ell}}$ and $M_{n_{\ell'}}$ are contained in $F_{i,j}$, we can apply Lemma \ref{gluelemma} to show $F_{i,j}\to F_{i+1,j}$ is a standard embedding.

By Theorem 5.1 in \cite{kenAmJM} we know that at each stage the free dimension of $M(i,j)$ is the sum of the free dimensions of $A(i)$ and $B(j)$ minus that of $D$.  By Proposition 4.3 in \cite{kenDuke} we know that since the embeddings are standard, the free dimension of the inductive limit of the $M(i,j)$ is the limit of the free dimensions of the $M(i,j)$.  Thus the inductive limit, $A*_{D}B$ has free dimension of the sum equal to that of $A$ and $B$ minus that of $D$. 

Note that since the matrix portion remain constant and thus can be computed by computing an earlier stage in the chain, using Theorem 5.1 in \cite{kenAmJM}.
\end{proof}

\begin{theorem}\label{maintheorem}  Let $A$ and $B$ be hyperfinite von Neumann Algebras, and $D$ be a finite dimensional subalgebra of both, with $G^{A,B}_{D}$ connected.  Then $A*_{D}B$ is the direct sum of a finite number of interpolated free group factors and possibly a hyperfinite von Neumann algebra.  Furthermore $\fdim(A*_{D}B)=\fdim(A)+\fdim(B)-\fdim(D)$.
\end{theorem}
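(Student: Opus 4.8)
The plan is to remove the extra hypothesis of Proposition~\ref{mainprop} --- that no minimal projection of $D$ be minimal in $A$ or $B$ --- and to accept in return that the single factor $F$ may break into several. As in the proof of Proposition~\ref{mainprop}, I would first invoke the argument of Lemma 5.2 of \cite{kenAmJM} to reduce to the case that $D$ is abelian, and split $A=A_{d}\oplus A_{a}$ and $B=B_{d}\oplus B_{a}$ into diffuse and atomic parts. The new feature is that at a vertex $k$ where $p_{k}^{D}$ is minimal in $A$ (resp.\ in $B$) one has $p_{k}^{D}A p_{k}^{D}=\bbc$ (resp.\ $p_{k}^{D}B p_{k}^{D}=\bbc$), so there is no diffuse piece to approximate there; such a \emph{rigid} vertex contributes no local free group factor, and it is precisely the presence of rigid vertices that allows the global factor to decompose into finitely many summands even though $G_{D}^{A,B}$ is connected.

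Next I would build the finite-dimensional approximants exactly as before: using Lemma~\ref{chainlemma} write $A_{d}$ and $B_{d}$ as inductive limits of finite-dimensional subalgebras $A_{d}(i)\supseteq p_{d}^{A}Dp_{d}^{A}$ and $B_{d}(j)\supseteq p_{d}^{B}Dp_{d}^{B}$, with the minimal projections under each non-rigid $p_{k}^{D}$ chosen small (at rigid vertices there is simply nothing to refine), and arrange by Lemma~\ref{decomplemma} that consecutive inclusions are simple steps. Setting $A(i)=A_{d}(i)\oplus A_{a}$, $B(j)=B_{d}(j)\oplus B_{a}$ and $\mcm(i,j)=A(i)*_{D}B(j)$, the finite-dimensional structure theorem (Theorem 5.1 of \cite{kenAmJM}) presents $\mcm(i,j)$ as a direct sum of finitely many interpolated free group factors and a multimatrix algebra. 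The local factors $F_{k}$ arising from $vN(p_{k}^{D}A(i)p_{k}^{D}\cup p_{k}^{D}B(j)p_{k}^{D})$ are linked by connectors exactly as in Proposition~\ref{mainprop}, except that rigid vertices host no $F_{k}$; the combinatorics of which local factors are fused into a common global summand is governed by $G_{D}^{A,B}$ together with the connectors coming from $A_{a}$ and $B_{a}$.

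I would then argue that, once $i$ and $j$ are large enough that $G_{D}^{A(i),B(j)}=G_{D}^{A,B}$, both the multimatrix part and the \emph{number} of free group factor summands are constant, since these are determined by the atomic parts and the connector structure, which no longer change. For the growth, each simple step from $A(i)$ to $A(i+1)$ is of one of the two types analysed in Lemmas~\ref{tensorlemma1} and~\ref{gluelemma}; applied inside the relevant free group factor summand these show that the induced map on each summand $F_{s}(i,j)\to F_{s}(i+1,j)$ is a standard embedding, and symmetrically in $j$. Passing to the inductive limit, each $F_{s}(i,j)$ converges (by property~(4) of standard embeddings) to an interpolated free group factor, or degenerates to a diffuse hyperfinite algebra when its free dimension tends to $1$; gathering these hyperfinite summands together with the constant multimatrix part yields the hyperfinite algebra of the statement.

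Finally, Theorem 5.1 of \cite{kenAmJM} gives $\fdim \mcm(i,j)=\fdim A(i)+\fdim B(j)-\fdim D$ at every finite stage, and, the embeddings being standard on each summand, Proposition 4.3 of \cite{kenDuke} lets the free dimension pass to the limit, yielding $\fdim(A*_{D}B)=\fdim(A)+\fdim(B)-\fdim(D)$. The main obstacle I anticipate is the bookkeeping forced by the several factor summands: unlike in Proposition~\ref{mainprop}, connectivity of $G_{D}^{A,B}$ no longer collapses everything to one factor, so one must prove that the number of summands stabilises and set up a canonical bijection between the summands at consecutive stages along which the simple-step analysis of Lemmas~\ref{gluelemma} and~\ref{tensorlemma1} can be applied summand by summand --- in particular handling the rigid vertices, where a connector abuts a scalar corner $p_{k}^{D}Ap_{k}^{D}=\bbc$ and the local contribution must be tracked separately.
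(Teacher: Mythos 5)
Your overall scaffolding matches the paper's: reduce to abelian $D$, approximate $A_{d}$ and $B_{d}$ by finite-dimensional algebras via Lemmas \ref{chainlemma} and \ref{decomplemma}, apply Theorem 5.1 of \cite{kenAmJM} at each finite stage $M(i,j)=A(i)*_{D}B(j)$, and use Lemmas \ref{tensorlemma1} and \ref{gluelemma} to get standard embeddings that let free dimension pass to the limit. But there is a genuine gap exactly where you flag ``the main obstacle'': you assert that the multimatrix part and the \emph{number} of free group factor summands stabilize ``since these are determined by the atomic parts and the connector structure,'' and you defer the construction of a bijection between summands at consecutive stages. That is precisely the content that needs proof, and it occupies most of the paper's argument. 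The paper resolves it by attaching to each free group factor summand $F$ of $M(i,j)$ a set $Q_{F}\subseteq I_{D}$ of vertices, assigned by two methods: Method I for $k$ with $p_{k}^{D}$ minimal in neither $A$ nor $B$ (where the argument of Proposition \ref{mainprop} localizes), and Method II for $k$ with $p_{k}^{D}$ minimal in, say, $B$, where one must single out a connector $v\in B_{a}$ with $vv^{*}=p_{k}^{D}$ and ask whether $v^{*}v$ is minimal and central in $\bar{q}N_{i,j}(S_{0})\bar{q}$. One then has to prove: (a) Method II does not depend on the choice of $v$; (b) every summand not orthogonal to both $A_{d}$ and $B_{d}$ receives a nonempty $Q_{F}$ --- this requires showing that a matrix block $M_{n_{\ell}}$ of $A_{d}(i)$ none of whose adjacent vertices is assigned remains a corner of a matricial summand as the connectors are adjoined one at a time; and (c) the assignment persists from $M(i,j)$ to $M(i+1,j)$, which is what makes the family $\{Q_{F}\}$, hence the number of summands, eventually constant and supplies the canonical correspondence along which your summand-by-summand simple-step analysis can run. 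None of this machinery appears in your proposal.

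Second, your notion of ``rigid vertex'' is too coarse and partly wrong. You claim that where $p_{k}^{D}$ is minimal in $A$ (resp.\ $B$) ``there is no diffuse piece to approximate there.'' That holds only if $p_{k}^{D}$ is minimal in \emph{both} algebras; in the crucial mixed case --- $p_{k}^{D}$ minimal in $B$ but not orthogonal to $A_{d}$ --- the corner $p_{k}^{D}Ap_{k}^{D}$ is diffuse, and whether it is absorbed into a free group factor summand or survives into the hyperfinite part of the limit is exactly the Method II dichotomy; handling it also forces the paper's extra trace condition $\tau(p)\leq t_{k}^{D}/4$ on minimal projections $p\in A(i)$ under such $p_{k}^{D}$, together with a $\delta$-argument guaranteeing $p_{k}^{D}A_{d}(i)p_{k}^{D}\subseteq F'$ once the sequence starts far enough along. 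Finally, a smaller error: no summand ``degenerates to a diffuse hyperfinite algebra when its free dimension tends to $1$.'' Standard embeddings have strictly increasing parameters, so a free group factor summand at any finite stage remains a free group factor in the limit; the hyperfinite summand of $A*_{D}B$ arises instead from the decreasing projections $P_{M}(i,j)$ onto the parts that are matricial or hyperfinite at every finite stage.
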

\begin{proof}  As before we assume $D$ is abelian and set $A=A_{a}\oplus A_{d}$ and $B=B_{a}\oplus B_{d}$.  In choosing our sequence the only difference from the situation in Proposition \ref{mainprop} is that when we ensure the minimal projections $p\in A_{d}(i)$, $p<p_{k}^{D}$ have smaller trace than $t_{k}^{D}-\tau(p')$, for all minimal projections $p'\in B$, $p'<p_{k}^{D}$, we are excluding the cases where $p'=p_{k}^{D}$ (in the context of prop \ref{mainprop} these did not exist).  In the case where $p_{k}^{D}$ is minimal in $B$ we instead require that $\tau(p)\leq t_{k}^{D}/4$ for all $p\in A,p\leq p_{k}^{D}$.

Unfortunately it is no longer true that we must have only one free group factor summand, nor that all of $A_{d}(i)$ and $B_{d}(j)$ must be contained in interpolated free group factors.  Nor is it true that all elements of $p_{k}^{D}A_{d}(i)p_{k}^{D}$ for some $k$ must be in the same interpolated free group factor.  

Instead for each free group factor summand $F$ in $M(i,j)$ which is not orthogonal to both $A_{d}$ and $B_{d}$, we shall associate a non-empty subset $Q_{F}\subseteq I_{D}$, so that $Q_{F}\cap Q_{F'}=\emptyset$ if $F\ne F'$.  Furthermore if $k\in Q_{F}$ the we will have $p_{k}^{D}F'p_{k}^{D}=0$ for all $F'\ne F$.

Each $k\in I_{D}$ can be assigned to a $Q_{F}$ in one of two ways, or not assigned at all.  \emph{Method I} is used if  $p_{k}^{D}$ is not minimal in either $A$ or $B$.  Then we have established that there is one interpolated free group factor summand $F$ of $M(i,j)$ which contains $p_{k}^{D}A_{d}(i)p_{k}^{D}$ and $p_{k}^{D}B_{d}(i)p_{k}^{D}$, we assign $k$ to $Q_{F}$

Suppose on the contrary, for $k\in I_{D}$ that $p_{k}^{D}$ is minimal in  either $A$ or $B$, and without loss of generality assume it is $B$.  Additionally assume $p_{k}^{D}$ is not orthogonal to $A_{d}$.

 As in the previous proof, and the proof of Theorem 5.1 in \cite{kenAmJM}, we construct $M(i,j)$ with $N_{i,j}(k)=vN(p_{k}^{D}A(i)p_{k}^{D}\cup p_{k}^{D}B(i)p_{k}^{D})$, and then define $N_{i,j}=\oplus_{k\in I_{D}}N_{i,j}(k)$.  Set $N_{i,j}(S)=vN(N_{i,j}\cup S)$ where $S$ is a set of connectors.

Then there is a set $S\subseteq (A(i)\cup B(j))$ of connectors so that $M(i,j)=N_{i,j}(S)$.  If there is a connector $v$ such that either $v^{*}v$ or $vv^{*}$ equals $p_{k}^{D}$, then $v\in B_{a}$, since $p_{k}^{D}$ is minimal in $B$ and not in $A(i)$.  Without loss of generality we can assume that there is at most one $v\in S$ so that $vv^{*}$ or $v^{*}v$ is $p_{k}^{D}$ (without loss of generality assume $vv^{*}$), since if there is another $w$ so that $ww^{*}=p_{k}^{D}$ we can replace it with $v^{*}w$.

We use \emph{Method II} when there is such a $v$ and  $v^{*}v$ is not minimal and central in $\bar{q}N_{i,j}(S_{0})\bar{q}$ where $\bar{q}=vv^{*}+v^{*}v$ and $S_{0}=S\backslash \{v\}$.

First let us show that this does not depend on the choice of $v$.  Assume there exists $v$ and $v'$, where $vv^{*}=v'v'^{*}=p_{k}^{D}$.  Now assume $v^{*}v$ is minimal and central in $\bar{q}N_{i,j}(S_{0})\bar{q}$, but $v'^{*}v'$ is not minimal and central in  $\bar{q'}N_{i,j}(S_{0})\bar{q'}$, where $\bar{q}=vv^{*}+v^{*}v$, $\bar{q'}=v'v'^{*}+v'^{*}v'$, and $S_{0}$ as before.  Then there must be some $w\in N_{i,j}(S_{0})$ such that $ww^{*}=v^{*}v$ and $w^{*}w=v'^{*}v$.  Note since $v^{*}v$ is minimal and central in $\bar{q}N_{i,j}(S_{0})\bar{q}$, it is minimal and commutes with $p_{k}^{D}$ in $N_{i,j}(S_{0})$.  Thus it must be in some factor summand of $N_{i,j}(S_{0})$ which is a matrix algebra orthogonal to $p_{k}^{D}$.  But using $w$, we see $v'^{*}v'$ must be in this same factor summand, contradicting our assumption.

As in the proof of theorem 5.1 in \cite{kenAmJM} we see that $\bar{q}M(i,j)\bar{q}=vN(\bar{q}N_{i,j}(S_{0})\bar{q}\cup \{v\})$, and furthermore that this is isomorphic to $\bar{q}N_{i,j}(S_{0})\bar{q}*_{\bbc\oplus\bbc}M_{2}(\bbc)$, where $\bbc\oplus\bbc$ is spanned by $vv^{*}$ and $v^{*}v$ and $M_{2}(\bbc)$ is generated by $v$.

Then Lemma 4.2 from \cite{kenAmJM} shows $\bar{q}M(i,j)\bar{q}$ is isomorphic to an interpolated free group factor $F$ (by our assumption on the size of projections in $A(i)$ we know the this isn't hyperfinite or zero), possibly direct sum with some matrix algebras.  Then $F$ is contained in some free group factor summand $F'$ in $M(i,j)$, and we assign $k$ to $Q_{F'}$.  From this we see that any free group factor in $M(i,j)$ other than $F'$ is orthogonal to $\bar{q}$, and thus to $p_{k}^{D}$.

In fact, in this case if we know $v^{*}v$ is not minimal and central in $\bar{q}N_{i,j}(S_{0})\bar{q}$, then we know it is not minimal there.  If it were minimal, but not central then there would be some partial isometry $w$ also in $\bar{q}N_{i,j}(S_{0})\bar{q}$, $w\in A$ so that $w^{*}w\leq v^{*}v$, and $ww^{*}\leq vv^{*}$, in which case, since $v^{*}v$ is minimal, $w^{*}w=v^{*}v$, and thus (since the traces are the same) $ww^{*}=vv^{*}$, and so $p_{k}^{D}=ww^{*}$, and this is minimal in $A$, contradicting our assumption.

This means that that there must be some value $\delta>0$ so that $\tau(v^{*}v)-\tau(p)>\delta$ for all minimal projections $p\in \bar{q}N_{i,j}(S_{0})\bar{q}$, $p\leq v^{*}v$.  We may start our sequence $A_{d}(i)$ far enough along so that any minimal projection in $A_{d}(i)$ has trace less than $\delta$.  In this case we actually have $p_{k}^{D}A_{d}(i)p_{k}^{D}\subseteq F'$ where $k\in Q_{F'}$.  Start our sequence far enough along that this is true for all $k$ assigned with Method II.\bigskip

We have now assigned the all the elements of $I_{D}$ that we are going to.  We must now show that $Q_{F}\ne \emptyset$ for any factor summand $F$ in $M(i,j)$ which is not orthogonal to $A_{d}(i)$ and $B_{d}(j)$.

To do this, take any $\ell\in I_{A_{d}(i)}$ or  $I_{B_{d}(j)}$ (without loss of generality assume $I_{A_{d}(i)}$).  Let $K=\{k\in I_{D}| p_{k}^{D}M_{n_{\ell}}p_{k}^{D}\ne 0\}$.  If there is any $k\in K$ and factor summand $F$ in $M(i,j)$ such that $k\in Q_{F}$, then $M_{n_{\ell}}\subseteq F$.

Conversely assume there is no such $k\in K$, i.e. $k\notin Q_{F}$ for all $F$.  We will show that $M_{n_{\ell}}$ is then orthogonal to all free group factor summands of $M(i,j)$.

We know that for each $k\in K$, $p_{k}^{D}$ is minimal in $B$, otherwise it would have been assigned by Method I.  For each $k\in K$, where $p_{k}^{D}$ is not central in $B$, choose a connector $v_{k}\in B_{a}$ such that $v_{k}v_{k}^{*}=p_{k}^{D}$, and let $V$ be the set of these connectors.  Now there cannot be any $v_{k}\in V$ such that $v_{k}v_{k}^{*}=p_{k}^{D}$ and $v_{k}^{*}v_{k}=p_{k'}^{D}$ for $k,k'\in K$, because choosing a connector $w\in M_{n_{\ell}}\subseteq A_{d}(i)$ so that $w^{*}w\leq p_{k}^{D}$ and $ww^{*}\leq p_{k'}^{D}$, we would have $v_{k}^{*}v_{k}$ not central in $\bar{q}N_{i,j}(S_{0})\bar{q}$, meaning $k$ would have been assigned by Method II, contrary to the hypothesis.

Thus we can choose $S$ to be a set of connectors in $M(i,j)$ containing $V$, so that $N_{i,j}(S)=M(i,j)$ and such that for each $k\in K$ there is at most one $v\in S$ such that either $vv^{*}$ or $v^{*}v$ equals $p_{k}^{D}$.

Now then $M_{n_{\ell}}$ is a factor summand in $N_{i,j}(S\backslash V)$, since in this algebra every $p_{k}^{D}$ not orthogonal to $M_{n_{\ell}}$ is minimal and central in $B(j)$.  We now write $V=\{v_{k(1)},\dots,v_{k(r)}\}$.  We consider the chain of embeddings,
\[
N_{i,j}(S\backslash V)\to N_{i,j}(S\backslash V\cup \{v_{k(1)}\})\to\dots\to N_{i,J}(S)=M(i,j).
\]
In each of them the algebra $M_{n_{\ell}}$ is embedded as the corner of a factor summand which is also a matrix algebra.  Thus $M_{n_{\ell}}$ is orthogonal to all free group factor summands in $M(i,j)$.

If Method I or II is applied to some $k\in I_{D}$, in $M(i,j)$, it will also be applied in $M(i+1,j)$.  Furthermore if there is some factor summand $F$ in $M(i,j)$ so that $k,k'\in Q_{F}$, then there must be a partial isometry $v\in M(i,j)$ so that $vv^{*} \leq p_{k}^{D}$ and $v^{*}v\leq p_{k'}^{D}$, and both $vv^{*}$ and $v^{*}v$ are in either $A_{d}(i)$ or $B_{d}(j)$ (though $v$ may not be).  Then, since that $v$ is embedded in $M(i+1,j)$, there must be some factor summand $F'$ of $M(i+1,j)$ so that $k,k'\in Q_{F'}$.  Thus the $Q_{F}$ must be eventually stable, and thus so must be the number of free group factor summands in the sequence of $M(i,j)$.  Without loss of generality, by starting the sequence $A(i)$ and $B(j)$ far enough along, we assume the number of interpolated free group factor summands of $M(i,j)$ is constant in $i,j$, as is the family $\{Q_{F}\}_{F}$.\bigskip

Consider the embedding $M(i,j)\to M(i+1,j)$, which corresponds to applying a simple step to $A_{d}(i)\to A_{d}(i+1)$.  We will show that for any free group factor summand $F$ in $M(i,j)$, with central support $p_{F}$, the restriction of the embedding $F\to p_{F}M(i+1,j)p_{F}$ is either a standard embedding or the identity.

First assume it is the first type of simple step, where we make a copy of a matrix algebra $M_{n_{\ell}}$, $\ell\in I_{A_{d}(i)}$.  If there is a factor summand $F$ with $k\in Q_{F}$ so that $p_{k}^{D}M_{n_{\ell}}p_{k}^{D}\ne 0$, then we know $M_{n_{\ell}}\subseteq F$.  Then, as in the previous proposition, we can apply Lemma \ref{tensorlemma1} to see that this is a standard embedding.

Note that if there is no such $F$, then $M_{n_{\ell}}$ is the corner of a matrix algebra factor summand of $M(i,j)$, in which case we end up with two copies of this in $M(i+1,j)$.

Assume instead it is the second type of simple step, adding a connector between $M_{n_{\ell}}$ and $M_{n_{\ell'}}$, for some $\ell,\ell'\in I_{A_{d}(i)}$.  If there are $F,F'$ factor summands of $M(i,j)$ which contain $M_{n_{\ell}}$ and $M_{n_{\ell'}}$ respectively, then by our assumption $F=F'$, and as in the previous proposition we can apply Lemma \ref{gluelemma}, to show that $F\to p_{F}M(i+1,j)p_{F}$ is standard.

If neither $M_{n_{\ell}}$ nor $M_{n_{\ell'}}$ is contained in an interpolated free group factor summand of $M(i,j)$, then these are corners of factor summands $M_{m}$ and $M_{m'}$ in $M(i,j)$, which are embedded in a factor summand  $M_{m'+m}$ in $M(i+1,j)$.

Finally if only of of $M_{n_{\ell}}$ or $M_{n_{\ell'}}$ is contained in an interpolated free group factor summand, say $M_{n_{\ell}}$, then we can apply Lemma 4.2 from \cite{kenAmJM} again to see that these are embedded in a free group factor summand $F'$ of $M(i+1,j)$ so that $F\to p_{F}F'p_{F}$ is standard or the identity, where $p_{F}$ is the identity of $F$.

If $P_{M}(i,j)$ is the projection onto the hyperfinite and matrix portion of $M(i,j)$ we see that $P_{M}(i,j)\geq P_{M}(i',j')$ if $(i,j)\leq (i',j')$.  Taking the inductive limit of those, we get a hyperfinite algebra.

As before the free dimensions of each $M(i,j)$ add up correctly, so $\fdim(A*_{D}B)=\fdim(A)+\fdim(B)-\fdim(D)$.
\end{proof}
\begin{cor}  If $A$ and $B$ are hyperfinite von Neumann algebras with finite dimensional abelian von Neumann subalgebra $D$, so that $G_{D}^{A,B}$ is connected, and so that any minimal projection $p$ in $A$ or $B$, with $p\leq p_{k}^{D}$ a minimal projection in $D$ $\tau(p)<\frac{1}{2}\tau(p_{k}^{D})$, then $A*_{D}B=L(F_{s})$ $s=\fdim(A)+\fdim(B)-\fdim(D)$.
\end{cor}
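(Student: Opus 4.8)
The plan is to deduce this from Proposition \ref{mainprop}. By that proposition, once we know we are in its hypotheses, $A*_{D}B$ is already a single interpolated free group factor (or a diffuse hyperfinite algebra) direct sum with finitely many matrix algebras, together with the free dimension formula. So the only work is to use the trace bound $\tau(p)<\frac{1}{2}\tau(p_{k}^{D})$ to rule out the two degeneracies that keep $A*_{D}B$ from being a single free group factor: the presence of matrix summands, and the exceptional case in which the diffuse part is hyperfinite rather than a free group factor.

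First I would check that the hypothesis places us in the setting of Proposition \ref{mainprop}, i.e. that no minimal projection $p_{k}^{D}$ of $D$ is minimal in $A$ or in $B$. Indeed, were $p_{k}^{D}$ minimal in $A$, the only minimal projection of $A$ under $p_{k}^{D}$ would be $p_{k}^{D}$ itself, of trace $t_{k}^{D}$, contradicting the bound $\tau(p)<\frac{1}{2}t_{k}^{D}$; the same argument applies to $B$. Since $G_{D}^{A,B}$ is connected, Proposition \ref{mainprop} applies and gives
\[
A*_{D}B=F\oplus\bigoplus_{r=1}^{R}M_{n_{r}},
\]
where $F$ is an interpolated free group factor or a diffuse type I hyperfinite algebra, and $\fdim(A*_{D}B)=\fdim(A)+\fdim(B)-\fdim(D)$.

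Next I would show $R=0$. Recall from the proof of Proposition \ref{mainprop} that the matrix summands are entirely controlled by the sets $Y(k)$, and that a pair $(\ell,\ell')$ with $\ell\in I_{A_{a}}$, $\ell'\in I_{B_{a}}$ contributes to $Y(k)$ exactly when $t_{\ell}/\lambda_{\ell,k}+t_{\ell'}^{B}/\lambda_{\ell',k}>t_{k}^{D}$. Here $t_{\ell}$ and $t_{\ell'}^{B}$ are traces of minimal projections of $A$ and $B$ lying under $p_{k}^{D}$, so the hypothesis gives $t_{\ell}<\frac{1}{2}t_{k}^{D}$ and $t_{\ell'}^{B}<\frac{1}{2}t_{k}^{D}$; since $\lambda_{\ell,k},\lambda_{\ell',k}\geq1$ we get
\[
\frac{t_{\ell}}{\lambda_{\ell,k}}+\frac{t_{\ell'}^{B}}{\lambda_{\ell',k}}\leq t_{\ell}+t_{\ell'}^{B}<t_{k}^{D},
\]
so the defining inequality never holds. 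Thus every $Y(k)$ is empty; as all matrix algebras in the construction arise from the $Y(k)$ and the connectors between them, no matrix summand is ever created, and the matrix portion of $A*_{D}B$ is trivial.

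Finally, $F$ cannot be diffuse hyperfinite: by Proposition \ref{mainprop} this happens only when $A=\bbc\oplus\bbc$, $B=\bbc\oplus\bbc$, and $D=\bbc$, which the hypothesis excludes, since then the two minimal projections under $p_{k}^{D}$ have traces summing to $t_{k}^{D}$, forcing one of them to be at least $\frac{1}{2}t_{k}^{D}$. Hence $F=L(F_{s})$ for some $s$, and combining with the previous step, $A*_{D}B=L(F_{s})$. The value of $s$ follows from the free dimension: since $\fdim(L(F_{s}))=s$ and $\fdim(A*_{D}B)=\fdim(A)+\fdim(B)-\fdim(D)$, we obtain $s=\fdim(A)+\fdim(B)-\fdim(D)$. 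The one step meriting genuine care is the verification that the threshold $\frac{1}{2}$ is sharp enough to empty every $Y(k)$ (the worst case being $\lambda_{\ell,k}=\lambda_{\ell',k}=1$); the remainder is a direct appeal to Proposition \ref{mainprop}.
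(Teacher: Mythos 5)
Your proof is correct and follows essentially the same route as the paper: the paper's own (two-sentence) proof likewise deduces the corollary from the preceding structure results, using the trace bound $\tau(p)<\frac{1}{2}\tau(p_{k}^{D})$ to rule out matrix algebra summands and connectedness of $G_{D}^{A,B}$ to obtain a single interpolated free group factor, with $s$ determined by the free dimension formula. The details you supply---verifying the hypothesis of Proposition \ref{mainprop} that no minimal projection of $D$ is minimal in $A$ or $B$, emptying the sets $Y(k)$, and excluding the exceptional diffuse hyperfinite case $A=B=\bbc\oplus\bbc$, $D=\bbc$---are exactly the steps the paper leaves implicit.
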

\begin{proof}  Based on the condition $\tau(p)\leq \frac{1}{2}\tau(p_{k}^{D})$, we know that there is no matrix algebra component.  Furthermore, by the connectedness of $G_{D}^{A,B}$ there can be only one interpolated free group factor.
\end{proof}

\begin{defn}  Define $\mathcal{R}_{2}$ to be the set of finite von Neumann algebras which are the direct sum of a finite number of interpolated free group factors and a hyperfinite von Neumann algebra.  Note $\mathcal{R}_{2}$ strictly contains the set $\mathcal{R}$ from \cite{kenLMS}.
\end{defn}
\begin{theorem}\label{closedtheorem} For $A,B\in \mathcal{R}_{2}$ and $D$ a finite dimensional subalgebra of $A$ and $B$, $M=A*_{D}B$ is in $\mathcal{R}_{2}$, and if generating sets of $A$ and $B$ have free dimension $d_{A}$ and $d_{B}$, respectively, then $M$ has a generating set of free dimension $d_{A}+d_{B}-\fdim(D)$.
\end{theorem}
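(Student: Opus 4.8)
The plan is to reduce everything to the hyperfinite case already settled in Theorem \ref{maintheorem}. First I would make two preliminary reductions. As in Proposition \ref{mainprop}, the proof of Lemma 5.2 of \cite{kenAmJM} lets me assume $D$ is abelian. Next I would dispose of the connectedness hypothesis, which is absent from this statement: if $G_{D}^{A,B}$ is disconnected, I partition $I_{D}$ into its connected components and set $z_{c}=\sum_{k\in c}p_{k}^{D}$ for each component $c$. Since there are no edges between distinct components, $z_{c}Az_{c'}=0=z_{c}Bz_{c'}$ for $c\ne c'$, so each $z_{c}$ is central in both $A$ and $B$; hence $A*_{D}B=\bigoplus_{c}(z_{c}Az_{c})*_{z_{c}D}(z_{c}Bz_{c})$ splits as a direct sum over components, each of which now has a connected graph. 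The free dimension bookkeeping across this direct sum is routine given the weights $\tau(p_{i})^{2}$ in the definition of $\fdim$. So I may assume $G_{D}^{A,B}$ is connected.

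The main device is to trade each interpolated free group factor summand for a free product of hyperfinite algebras. Write $A=H_{A}\oplus\bigoplus_{i}F_{i}$ with $F_{i}=L(F_{s_{i}})$ of central projection $q_{i}$, and let $q_{0}$ be the support of the hyperfinite part $H_{A}$. For each $i$ the corner $q_{i}D$ is a finite dimensional abelian subalgebra of $F_{i}$, and by the Corollary preceding this theorem I can realize $F_{i}=P_{i}*_{q_{i}D}Q_{i}$ for suitable diffuse hyperfinite $P_{i},Q_{i}\supseteq q_{i}D$ with connected graph and with $\fdim(P_{i})+\fdim(Q_{i})-\fdim(q_{i}D)=s_{i}$. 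Assembling these, and using that a free product of an algebra with the amalgam itself returns that algebra on the corresponding summand, I obtain
\[
A=\hat{A}*_{E}\check{A},\qquad \hat{A}=H_{A}\oplus\bigoplus_{i}P_{i},\quad \check{A}=q_{0}D\oplus\bigoplus_{i}Q_{i},\quad E=q_{0}D\oplus\bigoplus_{i}q_{i}D,
\]
where $\hat{A},\check{A}$ are hyperfinite and $E\supseteq D$ is finite dimensional (the key point being that $E$ uses only the finite dimensional corners of $D$, not the infinite dimensional $H_{A}$). I treat $B=\hat{B}*_{E'}\check{B}$ the same way.

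Next I would invoke associativity of the amalgamated free product in the form
\[
(A_{1}*_{E}A_{2})*_{D}B\cong A_{1}*_{E}(A_{2}*_{D}B)\qquad(D\subseteq E\subseteq A_{1}\cap A_{2},\ D\subseteq B),
\]
which expresses amalgamation over $D$ as performed in stages. Applying it twice yields
\[
A*_{D}B\cong \hat{A}*_{E}\Big(\hat{B}*_{E'}\big(\check{B}*_{D}\check{A}\big)\Big),
\]
an iterated amalgamated free product of hyperfinite algebras over finite dimensional subalgebras. The innermost factor $\check{B}*_{D}\check{A}$ is hyperfinite over finite dimensional $D$, so Theorem \ref{maintheorem} (splitting over components of its graph if necessary) places it in $\mathcal{R}_{2}$ with additive free dimension. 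Moreover the free dimensions telescope: using $\fdim(A)=\fdim(\hat{A})+\fdim(\check{A})-\fdim(E)$ and its analogue for $B$, the iterated additive formula collapses to $\fdim(A*_{D}B)=\fdim(A)+\fdim(B)-\fdim(D)$, which is precisely the claimed free dimension of a generating set.

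The hard part will be the structural claim that the two outer products stay in $\mathcal{R}_{2}$, since each has the form (hyperfinite)$\,*_{\text{fin.\,dim.}}\,$(an $\mathcal{R}_{2}$ algebra) and so is not directly covered by Theorem \ref{maintheorem}. I would handle this exactly as in the proof of Theorem \ref{maintheorem}: realize the free group factor summands of the inner $\mathcal{R}_{2}$ algebra, and of $\hat{A},\hat{B}$, as inductive limits under standard embeddings of finite dimensional amalgamated free products, so that every finite stage of the whole construction is an iterated amalgamated free product of finite dimensional algebras. Each such stage lies in $\mathcal{R}_{2}$ by the multimatrix result of \cite{kenAmJM}, and Lemmas \ref{gluelemma} and \ref{tensorlemma1} identify the connecting maps as standard embeddings; hence the inductive limit lies in $\mathcal{R}_{2}$ and the free dimensions pass to the limit. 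The crux, and the step demanding the most care, is to verify that these intermediate stages remain inside $\mathcal{R}_{2}$ with a controlled (finite) number of free group factor summands and that all relevant embeddings are standard, so that the telescoping of free dimensions above is legitimate.
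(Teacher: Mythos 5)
Your proposal's central device fails on free dimension grounds. You claim that each interpolated free group factor summand $F_{i}=L(F_{s_{i}})$ of $A$ can be realized as $P_{i}*_{q_{i}D}Q_{i}$ with $P_{i},Q_{i}$ hyperfinite, citing the Corollary after Theorem \ref{maintheorem}. But that Corollary (and Theorem \ref{maintheorem} itself) forces $s_{i}=\fdim(P_{i})+\fdim(Q_{i})-\fdim(q_{i}D)$, and every hyperfinite algebra has free dimension at most $1$ while $\fdim(q_{i}D)\geq 0$; hence any amalgamated free product of two hyperfinite algebras over a finite dimensional subalgebra has free dimension at most $2$ (with equality only when the amalgam is $\bbc$ and both sides are diffuse). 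So for a summand such as $L(F_{3})$ --- and $\mathcal{R}_{2}$ certainly contains such algebras, e.g.\ $A=L(F_{3})$ itself --- no such $P_{i},Q_{i}$ exist; producing them would amount to proving $L(F_{3})\cong L(F_{s})$ for some $s\leq 2$, i.e.\ settling the free group factor isomorphism problem, and it would also wreck the very free dimension count $d_{A}+d_{B}-\fdim(D)$ you are trying to establish. The reduction therefore collapses at its first step. (Your preliminary reduction for disconnected $G_{D}^{A,B}$, by contrast, is fine and is a point worth making explicit.)

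Even in the range $s_{i}\leq 2$ where the device could apply, the endgame is circular. After the associativity step (which you assert but do not prove for nested amalgams $D\subseteq E$), you are left with products of the form (hyperfinite)$\,*_{\text{fin.\ dim.}}\,$($\mathcal{R}_{2}$-algebra), and you propose to treat these ``exactly as in the proof of Theorem \ref{maintheorem}.'' But that proof rests on Lemma \ref{chainlemma}, which exhausts \emph{both} sides by finite dimensional subalgebras --- unavailable when one side contains an interpolated free group factor summand. Making the standard-embedding/inductive-limit machinery work with a non-hyperfinite side is precisely the content of the theorem being proved, not a step you may cite. The paper avoids decomposing free group factors altogether: it inducts on the total number of interpolated free group factor summands of $A$ and $B$, with Theorem \ref{maintheorem} as base case. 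Given a summand $pA$ of $A$ with central support $p$, Lemma 4.3 of \cite{kenAmJM} yields $pMp\cong pA*_{pD}(p\tilde{M}p)$ with $\tilde{M}=(pD\oplus(1-p)A)*_{D}B$; the induction hypothesis applies to $\tilde{M}$ (one fewer summand), and Lemma 4.1 of \cite{kenLMS} --- an interpolated free group factor amalgamated over a finite dimensional algebra with an algebra of this class is again an interpolated free group factor --- absorbs $pA$, after which $M\cong\tilde{M}(1-C_{M}(p))\oplus C_{M}(p)M$ finishes the step. Some absorption lemma of that kind is unavoidable in your approach too; once you have it, your decomposition of the free group factor summands becomes unnecessary.
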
  
\begin{proof}  
As in Theorem 4.4 in \cite{kenLMS}, we proceed by induction.  In this case on the total number of interpolated free group factor summands in $A$ and $B$.  The base case, where there are none, is Theorem \ref{maintheorem}.

From there we can proceed identically to the proof of Theorem 4.4 in \cite{kenLMS}.  Without loss of generality assume $A$ contains at least one interpolated free group factor summand, and let $p$ be the central support in $A$ of this factor summand.  Then by Lemma 4.3 in \cite{kenAmJM}, we know $pMp\cong pA*_{pD}(p\tilde{M}p)$, where $\tilde{M}=(pD\oplus(1-p)A)*_{D}B$.  Then we note that $(pD\oplus(1-p)A)$ is in $\mathcal{R}_{2}$ and has one fewer interpolated free group factor summand than $A$.  Thus, by our induction hypothesis $\tilde{M}\in\mathcal{R}_{2}$.  Then since $pA$ is an interpolated free group factor, we can apply Lemma 4.1 from \cite{kenLMS} to see that $pMp$ is an interpolated free group factor.  Then, as in Theorem 4.4 of \cite{kenLMS}, we note $M\cong\tilde{M}(1-C_{M}(p))\oplus(C_{M}(p)M)$, and that $\tilde{M}(1-C_{M}(p))\in\mathcal{R}_{2}$ and $(C_{M}(p)M)$ is an interpolated free group factor.  Thus $M\in\mathcal{R}_{2}$.

The free dimension calculation also proceeds exactly as in Theorem 4.4 of \cite{kenLMS}.

\end{proof}

\section{Examples}

\begin{eg}  For one of the simplest examples, consider 
\[
M=L^{\infty}(\mu_{1})\otimes R*_{\underset{\alpha}{\bbc}\oplus\underset{1-\alpha}{\bbc}}L^{\infty}(\mu_{2})\otimes R.
\]  
In this case, as long as the graph is connected, the embedding of $D$ does not matter, nor do the measures $\mu_{1}$ and $\mu_{2}$.  Since there are no minimal projections in $A$ or $B$, we know that $M$ is a single interpolated free group factor.  We also know $\fdim(M)=\fdim(A)+\fdim(B)-\fdim(D)=1+1-(1-\alpha^{2}-(1-\alpha)^{2})$,  Thus $M=L(F_{1+\alpha^{2}+(1-\alpha)^{2}})$.
\end{eg}

\begin{eg}  Next we look at an example where we get the direct sum of two free group factors, despite having connected graph.  Consider:
\[
A=\overset{p}{R}\oplus \underset{1/8}{M_{2}}\oplus \underset{1/8}{M_{2}}\oplus \overset{q}{R}
\]
\[
B=\underset{1/8}{\bbc}\oplus\underset{1/8}{M_{2}}\oplus\underset{1/4}{\bbc}\oplus\underset{1/8}{M_{2}}\oplus\underset{1/8}{\bbc}
\]
\[
D=\overset{p_{1}}{\underset{1/4}{\bbc}}\oplus\overset{p_{2}}{\underset{1/8}{\bbc}}\oplus\overset{p_{3}}{\underset{1/4}{\bbc}}\oplus\overset{p_{4}}{\underset{1/8}{\bbc}}\oplus\overset{p_{5}}{\underset{1/4}{\bbc}}
\]
Where $\tau(p)=\tau(q)=1/4$. 
\[
p_{1}=\left(
I_{R},
\left[\begin{matrix} 0 & 0\\0 & 0\\\end{matrix}\right],
\left[\begin{matrix} 0 & 0\\0 & 0\\\end{matrix}\right],
0_{R}\right)\in A, 
\left( 1,
\left[\begin{matrix} 1 & 0\\0 & 0\\\end{matrix}\right],
0,
\left[\begin{matrix} 0 & 0\\0 & 0\\\end{matrix}\right],
0\right)\in B
\]
\[
p_{2}=\left(
0_{R},
\left[\begin{matrix} 1 & 0\\0 & 0\\\end{matrix}\right],
\left[\begin{matrix} 0 & 0\\0 & 0\\\end{matrix}\right],
0_{R}\right)\in A, 
\left( 0,
\left[\begin{matrix} 0 & 0\\0 & 1\\\end{matrix}\right],
0,
\left[\begin{matrix} 0 & 0\\0 & 0\\\end{matrix}\right],
0\right)\in B
\]
\[
p_{3}=\left(
0_{R},
\left[\begin{matrix} 0 & 0\\0 & 1\\\end{matrix}\right],
\left[\begin{matrix} 1 & 0\\0 & 0\\\end{matrix}\right],
0_{R}\right)\in A, 
\left( 0,
\left[\begin{matrix} 0 & 0\\0 & 0\\\end{matrix}\right],
1,
\left[\begin{matrix} 0 & 0\\0 & 0\\\end{matrix}\right],
0\right)\in B
\]
\[
p_{4}=\left(
0_{R},
\left[\begin{matrix} 0 & 0\\0 & 0\\\end{matrix}\right],
\left[\begin{matrix} 0 & 0\\0 & 1\\\end{matrix}\right],
0_{R}\right)\in A, 
\left( 0,
\left[\begin{matrix} 0 & 0\\0 & 0\\\end{matrix}\right],
0,
\left[\begin{matrix} 1 & 0\\0 & 0\\\end{matrix}\right],
0\right)\in B
\]
\[
p_{5}=\left(
0_{R},
\left[\begin{matrix} 0 & 0\\0 & 0\\\end{matrix}\right],
\left[\begin{matrix} 0 & 0\\0 & 0\\\end{matrix}\right],
I_{R}\right)\in A, 
\left( 0,
\left[\begin{matrix} 0 & 0\\0 & 0\\\end{matrix}\right],
0,
\left[\begin{matrix} 0 & 0\\0 & 1\\\end{matrix}\right],
1\right)\in B
\]
We use $A(i)=M_{i}\oplus M_{2}\oplus M_{2}\oplus M_{i}$ and $B=B_{a}=B(j),\forall j$.  Applying theorem 5.1 from \cite{kenAmJM}, we see $M(i,j)=L(F_{\frac{9}{8}-\frac{1}{4i^{2}}})\oplus L(F_{\frac{9}{8}-\frac{1}{4i^{2}}})$.  Thus as $i\to\infty$, $M=A*_{D}B=L(F_{\frac{9}{8}})\oplus L(F_{\frac{9}{8}})$.  Checking, we see $\fdim(A)=\frac{31}{32}$, $\fdim(B)=\frac{7}{8}$ and $\fdim(D)=\frac{25}{32}$, this gives us that $\fdim(M)=\frac{17}{16}$ which matches our result.
\end{eg}
\begin{eg}  Next consider the case, $A=L^{\infty}[0,1]\oplus \underset{1/3}{M_{2}}$ and $B=\underset{1/3}{M_{2}}\oplus\underset{1/3}{\bbc}$, and $D=\overset{p_{1}}{\underset{1/3}{\bbc}}\oplus\overset{p_{2}}{\underset{1/3}{\bbc}}\oplus\overset{p_{3}}{\underset{1/3}{\bbc}}$.  $p_{1}=(I_{L^{\infty}},0)\in A$ and $(e_{11},0)\in B$, $p_{2}=(0,e_{11})\in A$ and $(e_{22},0)\in B$, and $p_{3}=(0,e_{22})\in A$, and $(0,1)\in B$.

Then we set $A(i)=\underset{1/(3i)}{\bbc^{i}}\oplus \underset{1/3}{M_{2}}$, and $B(j)=B\forall j$.  Then $M(i,j)=\underset{1/(3i)}{\bbc^{i}\otimes M_{3}}$.  Taking the inductive limit, this is $L^{\infty}\otimes M_{3}$.  

\end{eg}
\begin{eg}  Similarly, if $A=(L^{\infty}[0,1]\otimes R)\oplus \underset{1/3}{M_{2}}$ and $B=\underset{1/3}{M_{2}}\oplus\underset{1/3}{\bbc}$, and $D=\overset{p_{1}}{\underset{1/3}{\bbc}}\oplus\overset{p_{2}}{\underset{1/3}{\bbc}}\oplus\overset{p_{3}}{\underset{1/3}{\bbc}}$.  $p_{1}=(I_{L^{\infty}\otimes R},0)\in A$ and $(e_{11},0)\in B$, $p_{2}=(0,e_{11})\in A$ and $(e_{22},0)\in B$, and $p_{3}=(0,e_{22})\in A$, and $(0,1)\in B$.

Then we set $A(i)=\underset{1/(3i^{2})}{\bbc^{i}\otimes M_{i}}\oplus \underset{1/3}{M_{2}}$, and $B(j)=B\forall j$.  Then $M(i,j)=\underset{1/(3i^{2})}{\bbc^{i}\otimes M_{3i}}$.  Taking the inductive limit, this is $L^{\infty}\otimes R$.  

\end{eg}
\begin{eg}  Let $A=R$ and $B=\overset{p_{1}}{\underset{\alpha/2}{M_{2}}}\oplus \overset{p_{2}}{\underset{1-\alpha}{\bbc}}$ and $D=\overset{p_{1}}{\underset{\alpha}{\bbc}}\oplus\overset{p_{2}}{\underset{1-\alpha}{\bbc}}$, with $\alpha$ irrational.  Then we can set $A(i)=\underset{1/2^{i}}{M_{2^{i}-1}}\oplus \overset{q_{1}}{\underset{a}{\bbc}}\oplus \overset{q_{2}}{\underset{b}{\bbc}}$, where $a+b=1/2^{i}$, $q_{1}\leq p_{1}$ and $q_{2}\leq p_{2}$.  $B(j)=B$ for all $j$.  Then $M(i,j)=L(F_{s})\oplus\underset{b}{\bbc}$ for some $s$.  Noting $b\to 0$ as $i\to \infty$, so $M=L(F_{s})$, where $s$ can be worked out from the free dimension ($1+(3\alpha)/4)$).
\end{eg}
\begin{eg}  Let $A=\overset{p_{0}}{\underset{1/3}{M_{2}}}\oplus\bigoplus_{i=1}^{\infty}\overset{p_{i}}{\underset{\frac{1}{6\cdot 2^{i}}}{M_{2}}}$ and $B=\overset{q_{1}}{\underset{1/6}{M_{2}}}\oplus\overset{q_{2}}{\underset{2/3}{\bbc}}$, and $D=\overset{q_{1}}{\underset{1/3}{\bbc}}\oplus\overset{q_{2}}{\underset{2/3}{\bbc}}$, where $p_{i}\leq q_{2}$ for all $i\geq 1$, $q_{1}\leq p_{0}$, and $\tau(p_{0}q_{2})=1/3$.  Then $M=A*_{D}B$ is the amalgamated free product of two multimatrix algebras, with $G_{D}^{A,B}$ connected and $D$ finite dimensional.  The free product is:
\[
\underset{1/6}{M_{4}}\oplus\bigoplus_{i=1}^{\infty}\underset{\frac{1}{6\cdot 2^{i}}}{M_{2}}.
\]
\end{eg}
The above example contradicts the last statement of Theorem 5.1 of \cite{kenAmJM}.  That statement is erroneous and should be modified by changing ``$J$ and $X$ are finite'' to ``$J$ is finite''.
\bibliography{bib1}

\end{document}